\newcommand{\ba}{\begin{array}}
\newcommand{\ea}{\end{array}}
\newcommand{\bg}{\begin{gathered}}
\newcommand{\eg}{\end{gathered}}
\renewcommand{\r}{\rho}
\newcommand{\z}{\zeta}
\newcommand{\bea}{\begin{eqnarray}}
\newcommand{\eea}{\end{eqnarray}}
\newcommand{\D}{\displaystyle}
\newtheorem{lem}{Lemma}
\newtheorem{thm}{Theorem}
\newtheorem{prop}{Proposition}
\theoremstyle{remark}
\newtheorem{rmk}{Remark}
\theoremstyle{definition}
 \DeclareMathOperator\Ai{{Ai}}
 \DeclareMathOperator\re{{Re}}
\DeclareMathOperator\im{{Im}} \numberwithin{equation}{section}
\newcounter{comment}
\begin{document}

\title{Asymptotics of the partition function of a Laguerre-type random matrix model}
\date{\today}
\author{Y. Zhao$^\ast$, L.H. Cao$^\dag$ and D. Dai$^\ddag$}

\maketitle

\begin{abstract}
  We study asymptotics of the partition function $Z_N$ of a Laguerre-type random matrix model when the matrix order $N$ tends to infinity. By using the Deift-Zhou steepest descent method for Riemann-Hilbert problems, we obtain an asymptotic expansion of $\log Z_N$ in powers of $N^{-2}$.
\end{abstract}

\vspace{5mm}

\noindent 2010 \textit{Mathematics Subject Classification}. Primary 41A60, 15B52.

\noindent \textit{Key words and phrases}: asymptotic expansion; partition function; Laguerre-type; Riemann-Hilbert approach.


\vspace{5mm}

\hrule width 65mm

\vspace{2mm}

\begin{description}

\item \hspace*{3.8mm}$\ast $  College  of  Mathematics  and  Computational Science, Shenzhen University, Shenzhen, Guangdong 518060, P. R. China. \\
Email: \texttt{yzhao@szu.edu.cn}

\item \hspace*{3.8mm}$\dag$  College  of  Mathematics  and  Computational Science, Shenzhen University, Shenzhen, Guangdong 518060, P. R. China. \\
Email: \texttt{macaolh@szu.edu.cn} (corresponding author)

\item \hspace*{3.8mm}$\ddag $ Department of Mathematics, City University of
Hong Kong, Hong Kong SAR. Email: \texttt{dandai@cityu.edu.hk}

\end{description}

\newpage

\section{Introduction and statement of results}

In this paper, we are interested in asymptotics of the partition function of a Laguerre-type random matrix model as follows
\begin{equation} \label{pf-def}
 Z_N(\textbf{t})=\int_0^\infty \cdots \int_0^\infty \prod_{1\leq j<k\leq
 N}(\lambda_j-\lambda_k)^2\lambda_j^\alpha
 e^{-N\sum_{j=1}^N V_\textbf{t}(\lambda_j)}d\lambda_1\cdots d\lambda_N,
 \quad\quad \alpha>-1,
\end{equation}
where $\textbf{t}:= (t_1,\cdots, t_\nu)$ and $V_\textbf{t}(\lambda)$
is a polynomial of degree $\nu$ with positive leading coefficient
\begin{equation} \label{V-exponent}
 V_\textbf{t}(\lambda):= V(\lambda; t_1,\cdots, t_\nu)=\lambda+\sum_{k=1}^\nu
 t_k \lambda^k, \qquad t_\nu >0.
\end{equation}
It is well-known that partition functions are closely related to orthogonal polynomials.
Let $p_n(x; N ,\textbf{t})$ be the $n$-th order orthonormal polynomial with respect to the weight
\begin{equation} \label{weight}
  w_N(x) := x^\alpha e^{-NV_\textbf{t}(x)} \qquad x \in (0,\infty),
\end{equation}
that is,
\begin{equation}\label{op}
 \int_0^\infty p_m(x;N,\textbf{t})p_n(x;N,\textbf{t}) x^\alpha e^{-NV_\textbf{t}(x)}dx=\delta_{m,n}, \qquad p_n(x;N,\textbf{t})=\gamma_n^{(N,\textbf{t})}x^n+\cdots,
\end{equation}
with the leading coefficient $\gamma_n^{(N,\textbf{t})}>0$. (For the sake of brevity, we
shall suppress the $N$ and $\textbf{t}$ dependence when there is no
confusion.) Then, $Z_N(\textbf{t})$ can be rewritten as
\begin{equation} \label{zn-op}
  Z_N(\textbf{t}) = N! \prod_{k=1}^{N-1} (\gamma_k^{(N,\textbf{t})})^{-2};
\end{equation}
see \cite{mehta}. With the definition in \eqref{pf-def}, it is easily seen that the logarithmic derivative of $Z_N(\textbf{t})$ with respect to the parameter $t_l$ is given by
\begin{align}
 \frac{\partial}{\partial t_l} \log (Z_N)=&\frac{1}{Z_N}\int_0^\infty \cdots \int_0^\infty (-N\sum_{j=1}^N \lambda_j^l)\prod_{1\leq j <k \leq N}(\lambda_j-\lambda_k)^2 \lambda_j^\alpha e^{-N\sum_{j=1}^N V_\textbf{t}(\lambda_j)} d\lambda_1\cdots d\lambda_N \notag \\
 =&\mathbb{E}(-N\sum_{j=1}^N \lambda_j^l)=-N^2\mathbb{E}\left(\frac{1}{N}\textrm{Tr} M^l\right), 
\end{align}
where $\mathbb{E}$ denotes the expectation value with respect to the probability measure $d \mu_\textbf{t}$
\begin{equation}
 d \mu_{\textbf{t}}=\frac{1}{\tilde{Z}_N}\left(\det M\right)^\alpha \exp\left(-N \textrm{Tr}[V_{\textbf{t}}(M)]\right)dM.
\end{equation}
In the above formula, $dM$ is Lebesgue measure on $N\times N$
positive definite Hermitian matrices and $\tilde{Z}_N$ is the normalization constant such that $d \mu_{\textbf{t}}$
is a probability measure. Like \eqref{zn-op}, the logarithmic derivative can also be put into a form related to orthogonal polynomials as follows
\begin{equation}\label{de}
 \frac{\partial }{\partial t_l}\log Z_N=-N^2\int_0^\infty \lambda^l \rho_N^{(1)}(\lambda)d\lambda,
\end{equation}
where $\rho_N^{(1)}(\lambda)$ is the so-called ``one-point correlation function"
\begin{equation} \label{rho}
 \rho_N^{(1)}(\lambda)= \rho_N^{(1)}(\lambda;\textbf{t}):=\frac{1}{N}\lambda^\alpha e^{-NV_\textbf{t}(\lambda)}\sum_{k=0}^{N-1}p_k(\lambda)^2.
\end{equation}
By the fundamental theorem of calculus, we have from \eqref{de}
\begin{equation}\label{zn-int}
 Z_N(\textbf{t})=Z_N(\textbf{0})\exp\left[-N^2\int_{\textbf{0}}^{\textbf{t}} \int_0^\infty
 \rho_N^{(1)}(\lambda) {\nabla} V_\textbf{r}(\lambda) d\lambda  \cdot d \textbf{r} \right],
\end{equation}
where $Z_N(\textbf{0})$ is related to the classical Laguerre polynomials and given explicitly below
\begin{equation} \label{zn-laguerre}
  Z_N(\textbf{0}) = N^{-N(N+\alpha)} \prod_{j=1}^N \Gamma(j+1) \Gamma(j+\alpha);
\end{equation}
see \cite[p.321]{mehta}.

In the literature, a lot of researchers are interested in asymptotics of partition functions due to their importance in mathematical physics. For example, for the one-cut regular case in the one-matrix model, Ercolani and McLaughlin in \cite{em2003} applied the Riemann-Hilbert techniques to prove that the logarithmic of the partition function has an asymptotic expansion in powers of $1/N^2$. Later, Bleher and Its \cite{bleher2005} used another method to obtain this result. Their proof is mainly based on asymptotics of recurrence coefficients for the corresponding orthogonal polynomials. For multi-cut case in the one-matrix model, only formal asymptotic expansions of the partition functions are derived; see \cite{Bon:Dav:Eyn2000,Grava2006}. The rigorous mathematical proof is still unknown. For the one-matrix model, some people are also interested in cases when there exist some singularities in the model. For example, Krasovsky studied power-like (Fisher-Hartwig) singularities in  \cite{Kraso2007}. Moreover, the asymptotics for partition functions in multi-matrix models are studied, too. For instance, one may refer to a series of papers \cite{Col:Gui:Mau2009,Gui:Mai2005,Gui:Mau2007} done by Guionnet and her colleagues.

In this work, we plan to derive the asymptotic expansion of $\log Z_N(\textbf{t})$ as $N \to \infty$. Based on \eqref{zn-int}, it is sufficient to derive the asymptotics of the one-point correlation function $\rho_N^{(1)}(\lambda)$ for $\lambda \in [0,\infty)$. As $\rho_N^{(1)}(\lambda)$ involves orthogonal polynomials $p_k(\lambda)$, it is natural to study the asymptotics of $p_k(\lambda)$ first. This can be achieved by using the well-known Deift-Zhou steepest descent method for Riemann-Hilbert problems introduced by Deift et. al. in \cite{dkmvz,dkmvz2}; see also \cite{deift}. Note that the idea of using Riemann-Hilbert techniques was first adopted by Ercolani and McLaughlin in \cite{em2003} when they studied an Hermite-type random matrix model. Although there are quite a few parameters $t_k$ in \eqref{pf-def}, we restrict ourselves to the case when the parameter vector $\textbf{t}$ belongs to the following set $\mathbb{T}(\mathcal{T},\gamma)$
\begin{equation} 
  \mathbb{T}(\mathcal{T},\gamma)=\left\{\textbf{t}\in \mathbb{R}^\nu:|\textbf{t}|\leq \mathcal{T}, t_\nu> \gamma \sum_{j=1}^{\nu-1}|t_j|\right\} \qquad \textrm{for any given $\mathcal{T}>0$, $\gamma>0$}.
\end{equation}
In fact, we will choose $\mathcal{T}$ small enough and $\gamma$ large enough such that only the one-cut case needs to be studied.

The following is our main result.

\begin{thm} \label{main-thm}
 Assume $\alpha =0$ in \eqref{weight}. There exist $\mathcal{T}>0$ and $\gamma>0$, such that for $\textbf{t} \in
 \mathbb{T}(\mathcal{T},\gamma)$, we have the following asymptotic
 expansion
\begin{equation}
\log\left(\frac{Z_N(\textbf{t})}{Z_N(\textbf{0})}\right) \sim N^2 \sum_{k=0}^\infty \frac{1}{N^{2k}} e_k(\textbf{t}), \qquad \textrm{as } N \to \infty,
\end{equation}
where $Z_N(\textbf{0})$ is given in \eqref{zn-laguerre} and $e_j(\textbf{t})$ is an
analytic function of the vector $\textbf{t}$ in a neighborhood of
$\textbf{0}$ for every $j$.
\end{thm}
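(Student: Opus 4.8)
The plan is to reduce everything to the asymptotics of the one-point correlation function $\rho_N^{(1)}(\lambda)$ via the integral representation \eqref{zn-int}, and then to establish that this correlation function admits a full asymptotic expansion in powers of $N^{-2}$ with coefficients that are analytic in $\textbf{t}$ and whose $\lambda$-integrals against any polynomial converge. The starting point is to set up the Riemann-Hilbert problem for the $2\times 2$ matrix $Y(z)$ built from the orthonormal polynomials $p_k$ with respect to the weight $w_N(x)=x^\alpha e^{-NV_\textbf{t}(x)}$ (taking $\alpha=0$ as in the hypothesis). First I would carry out the standard Deift-Zhou analysis: determine the equilibrium measure $\mu_\textbf{t}$ minimizing the logarithmic energy with external field $V_\textbf{t}$, which in the one-cut regular regime is supported on a single interval $[a,b]=[a(\textbf{t}),b(\textbf{t})]$ with a density vanishing like a square root at the soft edge $b$ and behaving like $\lambda^{-1/2}$ (for $\alpha=0$) at the hard edge. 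The restriction to $\textbf{t}\in\mathbb{T}(\mathcal{T},\gamma)$ with $\mathcal{T}$ small and $\gamma$ large is precisely what guarantees we remain in this one-cut regime, so that $a$ and $b$ depend analytically on $\textbf{t}$ near $\textbf{0}$.

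The core of the argument is the sequence of transformations $Y\mapsto T\mapsto S\mapsto R$. After normalization at infinity using the $g$-function attached to $\mu_\textbf{t}$, and opening of lenses, one is left with a small-norm RH problem for $R(z)$ whose jump matrices are $I+O(1/N)$ uniformly, except near the two endpoints where local parametrices built from Bessel functions (at the hard edge $a$) and Airy functions (at the soft edge $b$) are needed. The key output I would extract is that $R(z)=I+\sum_{j\ge 1} R_j(z)/N^j$ with each $R_j$ analytic in $z$ off the contour and analytic in $\textbf{t}$, and—crucially—that the odd-order contributions to the relevant integrated quantity cancel. Unwinding the transformations back to $Y$, I would write $\rho_N^{(1)}(\lambda)$ in terms of the reproducing kernel $K_N(\lambda,\lambda)=\frac{1}{2\pi i}\lambda^\alpha e^{-NV_\textbf{t}(\lambda)}\big(Y^{-1}Y'\big)_{21}$-type expressions, and substitute the asymptotics of $Y$ in each region (bulk, hard-edge, soft-edge, and the exterior region $\lambda>b$ where $\rho_N^{(1)}$ is exponentially small). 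The goal is a pointwise-plus-uniform expansion
\begin{equation}
\rho_N^{(1)}(\lambda) \sim \psi_\textbf{t}(\lambda) + \sum_{k=1}^\infty \frac{1}{N^{2k}} \rho_k(\lambda;\textbf{t}),
\end{equation}
where $\psi_\textbf{t}$ is the equilibrium density, the odd powers of $N$ integrate to zero against polynomials, and each $\rho_k$ is integrable with analytic-in-$\textbf{t}$ moments.

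With such an expansion in hand, I would substitute it into \eqref{zn-int}. The leading $\psi_\textbf{t}$ term produces $N^2 e_0(\textbf{t})$, where $e_0$ is the (negative of the) line integral of $\int_0^\infty \psi_\textbf{r}(\lambda)\nabla V_\textbf{r}(\lambda)\,d\lambda$ along a path from $\textbf{0}$ to $\textbf{t}$; analyticity of $e_0$ follows from the analytic dependence of the endpoints $a,b$ and of $\psi_\textbf{t}$ on $\textbf{t}$. Each subsequent term $N^{2-2k}\rho_k$ contributes $N^{2-2k}e_k(\textbf{t})$ after the line integration, and the vanishing of odd-order terms is what forces the expansion to proceed in powers of $N^{-2}$ rather than $N^{-1}$. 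I would need to verify that the line integral is path-independent (so that $e_k$ is well-defined), which amounts to checking that the integrand $\int_0^\infty \rho_k^{(1)}(\lambda;\textbf{r})\nabla V_\textbf{r}(\lambda)\,d\lambda$ is a closed $1$-form in $\textbf{r}$—this is automatic since it is literally the gradient of $\log Z_N$ at each order, but at the level of the expansion it must be confirmed directly.

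The main obstacle I anticipate is controlling the expansion \emph{uniformly} in $\lambda$ over the unbounded interval $[0,\infty)$ and then justifying the term-by-term integration against the polynomial weight $\nabla V_\textbf{r}(\lambda)$, which grows like $\lambda^{\nu-1}$. Near the hard edge $\lambda=0$ the Bessel parametrix governs the behavior and one must confirm the $\alpha=0$ moments are finite; in the bulk the oscillatory cross-terms must be shown to contribute only at even orders; and for $\lambda>b$ one must establish exponential decay strong enough to dominate the polynomial growth. Stitching the three local regimes into a single globally integrable expansion whose coefficients are genuinely analytic in $\textbf{t}$—rather than merely smooth—is the delicate point, and it is where the smallness of $\mathcal{T}$ (keeping $b$ bounded and the analytic structure of the parametrices intact) and the $\gamma$-condition (keeping the model strictly one-cut) are doing the real work.
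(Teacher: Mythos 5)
Your overall strategy is the same as the paper's: reduce Theorem \ref{main-thm} via \eqref{zn-int} to the statement that $\int_0^\infty \Theta(\lambda)\rho_N^{(1)}(\lambda)\,d\lambda$ expands in powers of $N^{-2}$ with $\textbf{t}$-analytic coefficients, and prove that by Deift--Zhou analysis with an Airy parametrix at the soft edge $\beta$ and a Bessel parametrix at the hard edge $0$. The peripheral points you raise (exponential smallness for $\lambda>\beta$, analyticity in $\textbf{t}$ of the endpoint and the density, exactness of the $1$-form so the line integral is well defined) are all handled correctly. The problem is that the one fact on which the whole theorem hinges is asserted rather than proved, and moreover is mis-stated.

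The ``pointwise-plus-uniform expansion $\rho_N^{(1)}(\lambda)\sim\psi_{\textbf{t}}(\lambda)+\sum_k N^{-2k}\rho_k(\lambda;\textbf{t})$'' you aim for does not hold near the endpoints: there the local parametrices contribute at \emph{every} order of $1/N$, with oscillatory Bessel and Airy factors in the scaling variable $\tilde f\propto N^2$, and the odd orders disappear only after integration against a smooth test function. The mechanism by which this happens is the actual content of the paper and is absent from your plan. It has two ingredients. First, an algebraic symmetry special to $\alpha=0$ (Lemma \ref{lemm-s-pattern}): the coefficients $S_k(z)$ of the error matrix lie in $\mathrm{span}\{I,\sigma_2\}$ for $k$ even and in $\mathrm{span}\{\sigma_3,\sigma_1\}$ for $k$ odd, which forces the hard-edge expansion of $\rho_N^{(1)}$ to pair the odd powers of $1/N$ with $\tilde f^{1/2}I_0(2\tilde f^{1/2})I_0'(2\tilde f^{1/2})$ and the even powers with $a^{\pm2}\tilde f^{1/2}I_0^2$ and $a^{\pm2}\tilde f^{1/2}(I_0')^2$ (Lemma \ref{lemma-rho-exp}); this is also precisely where the hypothesis $\alpha=0$ enters, as Remark \ref{rmk-alpha} explains. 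Second, an integration-by-parts induction (Proposition \ref{prop-four-int} and Lemma \ref{main-lemma}): since $\tilde f'$ carries a factor $N^{2}$, each integration by parts in the fast variable trades a derivative of the compactly supported cutoff for a factor $N^{-1}$ or $N^{-2}$, which demotes the odd-order Bessel integrals to even order and, combined with the large-argument asymptotics of $J_0$, shows the remaining hard-edge integral has a genuine $N^{-2}$ expansion. The soft-edge analogue can be quoted from Ercolani--McLaughlin, but the Bessel case is the new work here; without these two steps your claim that ``the odd powers integrate to zero'' is unsupported and the proof is incomplete.
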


\begin{rmk}
  It is natural to ask why we only consider such a simple case $\alpha = 0$ instead of general $\alpha$. The reason is that, when $\alpha \neq 0$, nice symmetric properties of the asymptotic expansion of $\rho_N^{(1)}(\lambda)$ are lost. Of course, we can still derive the asymptotic expansion of $\log\left(\frac{Z_N(\textbf{t})}{Z_N(\textbf{0})}\right)$, but this expansion is given in powers of $1/N$ instead of $1/N^2$ in the above theorem. For more detailed explanation, one may refer to Remark \ref{rmk-alpha} following Lemma \ref{lemm-s-pattern}.
\end{rmk}

As mentioned earlier, to prove our main result, we will derive the uniform asymptotic expansion of $\rho_N^{(1)}(\lambda)$ for $\lambda \in [0,\infty)$ first. Actually, with its uniform asymptotic expansion, we can get the following more general result.
\begin{thm} \label{thm2}
Let $\Theta(\lambda)$ be a $C^\infty$-smooth function and grow no faster than a polynomial for $\lambda \to \infty$.
When $\alpha =0$, there exist $\mathcal{T}>0$ and $\gamma >0$, such that for all $\textbf{t} \in
\mathbb{T}(\mathcal{T},\gamma)$, the following expansion holds true
\begin{equation} 
\int_0^\infty \Theta(\lambda)\rho_N^{(1)}(\lambda;\textbf{t})d\lambda \sim \sum_{k=0}^\infty \frac{1}{N^{2k}} \Theta_k, \qquad \textrm{as } N \to \infty,
\end{equation}
where the coefficients $\Theta_i$ depend analytically on $\textbf{t}$ for
$\textbf{t} \in \mathbb{T}(\mathcal{T},\gamma)$.
\end{thm}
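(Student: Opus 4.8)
My plan is to prove Theorem~\ref{thm2} by first deriving a \emph{uniform} asymptotic expansion of the one-point correlation function $\rho_N^{(1)}(\lambda)$ on $[0,\infty)$ via the Deift--Zhou steepest descent analysis, and then integrating this expansion against $\Theta$ term by term. The starting point is the Fokas--Its--Kitaev characterization of the orthonormal polynomials $p_n$ by a $2\times2$ matrix Riemann--Hilbert problem for $Y(z)$, together with the Christoffel--Darboux representation which, on the diagonal, expresses $\rho_N^{(1)}(\lambda)=\frac1N K_N(\lambda,\lambda)$ as an entry of $\frac{w_N(\lambda)}{2\pi i}Y_+^{-1}(\lambda)Y_+'(\lambda)$ (cf.\ \eqref{rho}). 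Because $\textbf{t}\in\mbb{T}(\mc{T},\gamma)$ with $\mc{T}$ small and $\gamma$ large confines us to the one-cut situation, the equilibrium measure for $V_\textbf{t}$ is supported on a single interval $[0,b(\textbf{t})]$ with a hard edge at the origin and a soft edge at $b$, and all data below depend analytically on $\textbf{t}$ over the compact set $\mbb{T}(\mc{T},\gamma)$.

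I would then carry out the usual chain of transformations $Y\to T\to S\to R$: first normalize at infinity using the $g$-function built from the equilibrium density $\psi$; then open lenses around $[0,b]$ so that the oscillatory off-diagonal entries become exponentially small off the cut; then install a global parametrix from the Szeg\H{o} function of $w_N$ together with a Bessel parametrix on a fixed disk $U_0$ around the hard edge and an Airy parametrix on a disk $U_b$ around the soft edge. The resulting matrix $R(z)$ solves a small-norm RHP whose jump is $I+O(1/N)$, supported on $\partial U_0\cup\partial U_b$ and the lens boundaries, and it admits a full expansion $R(z)\sim I+\sum_{k\ge1}R^{(k)}(z)/N^{k}$ whose coefficients are computed recursively by residue calculus. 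Unfolding $Y$ in terms of $R$ and the parametrices then yields the uniform expansion of $\rho_N^{(1)}(\lambda)$: in the bulk it has the schematic form $\psi(\lambda)+\sum_{k\ge1}N^{-k}\bigl(a_k(\lambda)+b_k(\lambda)\cos(2\pi N\int_\lambda^b\psi)+\cdots\bigr)$ with smooth coefficients $a_k,b_k$, while in $U_0$ and $U_b$ it is given by expansions in Bessel and Airy functions of the appropriately rescaled variable.

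With the uniform expansion in hand, I would split $\int_0^\infty\Theta\,\rho_N^{(1)}\,d\lambda$ into the contribution of the two edge disks, the bulk $[\delta,b-\delta]$, and the region $\lambda>b$ where $\rho_N^{(1)}$ is exponentially small and hence negligible. In the bulk, the smooth part integrates to a power series in $1/N$ directly, whereas the oscillatory part is handled by repeated integration by parts: since the phase $2\pi N\int_\lambda^b\psi$ has nonvanishing derivative on $[\delta,b-\delta]$, each integration gains a factor $1/N$, so the interior contribution is beyond all orders and only boundary terms at $\delta$ and $b-\delta$ remain. These boundary terms are arranged to cancel against the tails of the Airy- and Bessel-region integrals, so that the $\delta$-dependence drops out and each edge disk supplies its own contribution through the known moment asymptotics of the Airy and Bessel kernels.

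The heart of the argument, and the main obstacle, is to show that when $\alpha=0$ every odd power of $1/N$ in the assembled expansion cancels, so that the surviving series runs over $1/N^{2k}$. This rests on a parity property of the expansion coefficients of $\rho_N^{(1)}$, to be established in Lemma~\ref{lemm-s-pattern}: for $\alpha=0$ the odd-order bulk corrections $a_{2k+1}$ are total derivatives (hence integrate against smooth $\Theta$ to boundary terms only), the odd-order oscillatory amplitudes combine antisymmetrically across the cut, and the odd-order edge contributions from the Bessel and Airy parametrices vanish by the explicit symmetry of those local models at $\alpha=0$. Verifying that all three mechanisms conspire, so that the boundary terms left from the bulk integration by parts exactly absorb the odd-order edge pieces, is the delicate bookkeeping step; it is precisely here that the special value $\alpha=0$ is used, whereas general $\alpha$ would leave surviving $1/N$ terms (see Remark~\ref{rmk-alpha}). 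Finally, analyticity of the coefficients $\Theta_k$ in $\textbf{t}$ over $\mbb{T}(\mc{T},\gamma)$ follows from the analytic, uniform dependence of the equilibrium density, the Szeg\H{o} function, the parametrices, and the coefficients $R^{(k)}$ on the parameter vector $\textbf{t}$.
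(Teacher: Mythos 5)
Your overall strategy --- Deift--Zhou analysis, a Bessel model at the hard edge and an Airy model at the soft edge, and a parity argument at $\alpha=0$ to eliminate odd powers of $1/N$ --- is the same as the paper's. But the way you organize the integration, and in particular the way you propose to kill the odd powers, contains a genuine gap at exactly the step you yourself flag as ``delicate bookkeeping.'' You expand $\rho_N^{(1)}$ in the bulk into a smooth part plus explicitly oscillatory terms, integrate by parts on the non-stationary phase there, and then propose to cancel the resulting boundary terms at the artificial cutoffs $\delta$ and $b-\delta$ against ``tails'' of the edge-disk integrals. That matching is the hard part: the bulk expansion is not uniformly valid up to the turning points, the edge expansions live in a different (rescaled) variable, and nothing in your sketch shows the boundary terms actually telescope. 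The paper sidesteps this entirely with a smooth partition of unity $\{\chi_0,\chi_\beta\}$ (see \eqref{rho-exp-full}): on the whole fixed-size region $[0,z^*+\varepsilon]$ it keeps the Bessel functions \emph{unexpanded} in the exact representation of Lemma \ref{lemma-rho} (since $4\tilde f=-\zeta^3$, the function $I_0(2\tilde f^{1/2})=J_0(\zeta^{3/2})$ automatically interpolates between hard-edge behavior and bulk oscillation), so every integration by parts is against a compactly supported smooth factor and no boundary terms ever appear.

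The parity mechanism you describe is also not right as stated. You assert that the odd-order bulk corrections are total derivatives and ``hence integrate against smooth $\Theta$ to boundary terms only''; that inference fails, since $\int\Theta\,G'\,dz=-\int\Theta'G\,dz$ up to boundary terms, which is not negligible. The claimed antisymmetry of the odd oscillatory amplitudes and the vanishing of the odd edge contributions are likewise not established. What happens in the paper is different and is not a cancellation at all: the symmetry pattern of $S_k$ (Lemma \ref{lemm-s-pattern}) forces the odd powers of $1/N$ near the hard edge to multiply $I_0I_0'$ and the even powers to multiply $I_0^2$ and $(I_0')^2$ (Lemma \ref{lemma-rho-exp}); then a single integration by parts converts $N^{-j+1}\int\theta\,I_0I_0'\,dz$ into $N^{-j}\int(\cdots)\,I_0^2\,dz$ because $\tilde f^{1/2}/\tilde f'=O(N^{-1})$ while $N^{-2}\tilde f'$ is $N$-independent (Proposition \ref{prop-four-int}); so every odd-order term is \emph{absorbed} into the even-order scale rather than cancelled. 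The surviving even-order integrals are then expanded in $1/N^2$ by the iterated-antiderivative argument of Lemma \ref{main-lemma}. Until you supply a concrete mechanism of this kind, the absence of odd powers --- which is the entire content of the theorem beyond a routine $1/N$ expansion --- remains unproved.
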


This paper is organized as follows. In Section \ref{sec-rhp&measure}, we present the Riemann-Hilbert problem for the orthogonal polynomial $p_N(z)$ in \eqref{op} and calculate the equilibrium measure. In Section \ref{sec-rhp&analysis}, we give a sketch about the Deift-Zhou steepest descent analysis. Based on the uniform asymptotic expansion obtained for $p_N(z)$, we obtain the asymptotic expansion for $\rho_N^{(1)} (z)$ and show some of its nice properties in Section \ref{sec-rho&asy}. Finally, in Section \ref{sec-mainthm}, we prove Theorem \ref{thm2}, which yields Theorem \ref{main-thm} as a direct result.


\section{Riemann-Hilbert problems and the equilibrium measure} \label{sec-rhp&measure}

To obtain the asymptotics for $\rho_N^{(1)}(\lambda)$ in (\ref{rho}), it is helpful to put it into the following form by using the Christoffel-Darboux formula (see \cite{GS})
\begin{equation} \label{rho-pn}
 \rho_N^{(1)}(\lambda)=\frac{1}{N}\lambda^\alpha
 e^{-NV_\textbf{t}(\lambda)}\left[p_N'(\lambda)p_{N-1}(\lambda)-p_N(\lambda)p_{N-1}'(\lambda)\right]\frac{\gamma_{N-1}^{(N)}}{\gamma_N^{(N)}}.
\end{equation}
Then, it is obvious that the asymptotics of $\rho_N^{(1)}(\lambda)$ is determined by the asymptotics of $p_N(\lambda)$ as $N \to \infty.$
To derive the asymptotic expansion of $p_N(\lambda)$, we apply the Deift-Zhou steepest descent method for Riemann-Hilbert problems.

\subsection{Riemann-Hilbert problems}

Consider a $2\times2$ Riemann-Hilbert (RH) problem as follows:

\begin{itemize}
\item[(Y$_a)$] $Y:\mathbb{C}\setminus \mathbb{R}\rightarrow \mathbb{C}^{2\times 2}$ is analytic for $\mathbb{C}\setminus
[0,\infty)$;\vskip 2mm

\item[(Y$_b)$] $Y(z)$ possesses continuous boundary values on $(0,\infty)$. Let $Y_+(x)$ and $Y_{-}(x)$ denote the limiting value of $Y(z)$ as $z$ tends to $x$ from above and below, respectively. They satisfy
\[Y_{+}(x)=Y_{-}(x) \left(
\begin{array}{cc}
1 & x^\alpha e^{-NV_\textbf{t}(x)} \\
0 & 1
\end{array}
\right) \qquad \textrm{for $x\in (0,\infty)$; }
\]

\item[(Y$_c)$]  for $z\in \mathbb{C}\setminus [0,\infty)$,
\begin{equation} \label{Y-large}
  Y(z)=\Big[I+O\Big(\frac{1}{z}\Big)\Big] \left(
\begin{array}{cc}
z^N & 0 \\
0 & z^{-N}
\end{array}
\right) \quad\text{as}\,\,z\rightarrow \infty;
\end{equation}

\item[(Y$_d)$]  $Y(z)=O\left(
\begin{array}{cc}
1 & \eta_\alpha(z) \\
1 & \eta_\alpha(z)
\end{array}
\right)$ as $z\to 0, z\in \mathbb{C}\setminus [0,\infty)$, where
$\eta_\alpha(z)$ is defined by
\begin{equation}
 \eta_\alpha(z)=
 \begin{cases}
  1, & \text{if}\,\,\alpha>0,\\
 \displaystyle \log(1/|z|), & \text{if}\,\,\alpha=0,\\
 \displaystyle |z|^\alpha, & \text{if}\,\,-1<\alpha<0.
 \end{cases}
\end{equation}
\end{itemize}

According to the significant results of Fokas, Its and Kitaev \cite{fik}, the
solution of the above RH problem is given in terms of the monic polynomials
$\pi_N(x)$ orthogonal with respect to $w(x)$. This establishes an
important relation between orthogonal polynomials and
Riemann-Hilbert problems.

\begin{lem} \label{fik-lemma} (Fokas, Its and Kitaev \cite{fik}).
The unique solution to the above RH problem for $Y$ is given by
\begin{equation} \label{Y-sol}
 Y(z)=\left(
 \begin{array}{ll}
 \pi _N(z) &  C[\pi_N w](z) \\
 c_N\pi_{N-1}(z) & c_NC[\pi_{N-1} w](z)
 \end{array}
 \right)
\end{equation}
where $w$ is the weight function given in \eqref{weight}, $c_N=-2\pi
i\gamma_{N-1}^2$ and
$$
C[f](z):=\frac{1}{2\pi i}\int_0^{\infty}\frac{f(\zeta)}{\zeta-z}d\zeta,\quad
z\in\mathbb{C}\setminus[0,\infty),
$$
is the Cauchy transform of $f$.
\end{lem}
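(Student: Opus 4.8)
The statement is a verification result: a specific matrix is exhibited and claimed to be the unique solution of the RH problem. Accordingly, the plan is twofold. First I would confirm that the $Y$ in \eqref{Y-sol} satisfies each of (Y$_a$)--(Y$_d$); then I would establish uniqueness by a standard Liouville-type argument. Analyticity (Y$_a$) is immediate: the entries $\pi_N$ and $\pi_{N-1}$ are polynomials, hence entire, while $C[\pi_N w]$ and $C[\pi_{N-1}w]$ are analytic on $\mathbb{C}\setminus[0,\infty)$ directly from the integral definition of the Cauchy transform. For the jump (Y$_b$) the tool is the Sokhotski--Plemelj relation $C_+[f](x)-C_-[f](x)=f(x)$, which gives $(C[\pi_N w])_+-(C[\pi_N w])_-=\pi_N(x)w(x)$ on $(0,\infty)$; since the polynomial entries have no jump, this reproduces exactly the upper-triangular jump matrix with $(1,2)$-entry $w(x)=x^\alpha e^{-NV_\textbf{t}(x)}$.

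The heart of the verification is the large-$z$ behavior (Y$_c$). Here I would expand the Cauchy kernel as $\frac{1}{\zeta-z}=-\sum_{k\geq0}\zeta^k z^{-k-1}$ for large $z$, so that
\[
C[\pi_N w](z)=-\frac{1}{2\pi i}\sum_{k=0}^\infty z^{-k-1}\int_0^\infty \pi_N(\zeta)\zeta^k w(\zeta)\,d\zeta.
\]
Orthogonality of $\pi_N$ against every power $\zeta^k$ with $k<N$ annihilates all terms up to $k=N-1$, forcing $C[\pi_N w](z)=O(z^{-N-1})$; this is precisely the decay needed so that the $(1,2)$-entry of $Y$ contributes only at order $O(1/z)$ after multiplication by $z^N$. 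The analogous expansion of $C[\pi_{N-1}w]$ has its first nonvanishing term at $k=N-1$, and one checks that the constant $c_N=-2\pi i\gamma_{N-1}^2$ is chosen exactly so that $c_N\,C[\pi_{N-1}w](z)\,z^N\to\gamma_{N-1}^2\|\pi_{N-1}\|^2=1$, using $p_{N-1}=\gamma_{N-1}\pi_{N-1}$ and $\|p_{N-1}\|^2=1$. The diagonal normalizations $\pi_N(z)z^{-N}=1+O(1/z)$ and $c_N\pi_{N-1}(z)z^{-N}=O(1/z)$ then follow from monicity.

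For the endpoint condition (Y$_d$) I would estimate the Cauchy integrals near $z=0$ by isolating the $\zeta^\alpha$ factor of the weight; a standard analysis of a Cauchy-type integral with an algebraic endpoint singularity yields the three regimes of $\eta_\alpha(z)$ (bounded, logarithmic, or $|z|^\alpha$) according to the sign of $\alpha$. For uniqueness I would first show $\det Y\equiv1$: the jump matrix has determinant $1$, so $\det Y$ continues analytically across $(0,\infty)$; near $z=0$ the growth permitted by (Y$_d$) is at worst $O(\log|z|)$ or $O(|z|^\alpha)$ with $\alpha>-1$, each of which gives a removable singularity since $z\det Y\to0$; and $\det Y\to1$ at infinity by (Y$_c$). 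Liouville's theorem then forces $\det Y\equiv1$, so $Y^{-1}$ exists with controlled behavior. Given any second solution $\tilde Y$, the ratio $R:=\tilde Y Y^{-1}$ has no jump on $(0,\infty)$ (the common jump cancels), removable singularities at $z=0$, and $R\to I$ at infinity; a second application of Liouville gives $R\equiv I$, i.e.\ $\tilde Y=Y$.

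The main obstacle is the bookkeeping in (Y$_c$): aligning the precise powers of $z$ in all four entries with the prescribed form $(I+O(1/z))\,\mathrm{diag}(z^N,z^{-N})$, and in particular recognizing that the off-diagonal normalization is delivered exactly by the orthogonality relations together with the definition of $c_N$. By comparison, the endpoint analysis (Y$_d$) and the two Liouville arguments are routine. Since the result is the classical theorem of Fokas, Its and Kitaev \cite{fik}, in the paper itself it would simply be cited, with the verification above serving as the underlying justification.
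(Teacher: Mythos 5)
The paper gives no proof of this lemma at all: it is stated as a known result and attributed to Fokas--Its--Kitaev \cite{fik}, exactly as you anticipated in your closing sentence. Your verification is the standard and correct argument for that result, and all the key points are in place: Sokhotski--Plemelj for the jump, the moment expansion of the Cauchy transform combined with orthogonality to get the $O(z^{-N-1})$ decay of the $(1,2)$-entry, the computation showing $c_N=-2\pi i\gamma_{N-1}^2$ normalizes the $(2,2)$-entry via $\int_0^\infty \pi_{N-1}(\zeta)\zeta^{N-1}w(\zeta)\,d\zeta=\gamma_{N-1}^{-2}$, and the two-step Liouville argument ($\det Y\equiv 1$ first, then $R=\tilde Y Y^{-1}\equiv I$). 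The only place where I would add a word of care is the removability of $z=0$ for $R$ when $-1<\alpha<0$: a naive bound on a product of two entries each of size $|z|^\alpha$ would give $|z|^{2\alpha}$, which is not obviously $o(|z|^{-1})$. The argument goes through because condition (Y$_d$) controls the \emph{columns} of $Y$ (only the second column is singular), hence $\mathrm{adj}(Y)=Y^{-1}$ has only its first \emph{row} singular, and every entry of $\tilde Y\,\mathrm{adj}(Y)$ is a sum of terms in which at most one factor is $O(\eta_\alpha)$; so $zR(z)\to 0$ and the singularity is removable. With that one clarification your proof is complete, and it supplies the justification the paper leaves to the citation.
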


\subsection{Equilibrium measure}

The equilibrium measure plays an important role in the Riemann-Hilbert analysis and we wish to
calculate it explicitly. Recall that the equilibrium measure $\mu_V$ is
the unique minimizer of the following weighted energy
\begin{equation}
 I_V(\mu)=\int\int \log \frac{1}{|x-y|}d\mu(x)d\mu(y)+\int V(x)d\mu(x)
\end{equation}
among all probability measures on $[0, \infty)$. For our problem, we have the explicit formula as follows.

\begin{thm} \label{thm-measure}
There are $\mathcal{T}_0>0$ and $\gamma_0>0$ such that for all $0<\mathcal{T}<T_0$ and
$\gamma>\gamma_0$, the following holds true. If $\textbf{t} \in
\mathbb{T}(\mathcal{T},\gamma)$, then we have
\begin{equation} \label{measure-mu}
 d\mu_V=\psi_V(x)dx, \quad\quad
 \psi_V(x)=\frac{1}{2\pi}\chi_{(0,\beta)}(x)\sqrt{\frac{\beta-x}{x}}h(x),
\end{equation}
where $h(x)$ is a polynomials of degree $\nu-1$. This polynomial is strictly
positive on the interval $[0,\beta]$ and defined as
\begin{equation} 
 h(z)= \frac{1}{2\pi
 i}\oint_{\Gamma_z}\sqrt{\frac{y}{y-\beta}}V_\textbf{t}'(y)\frac{dy}{y-z}, \qquad
 \textrm{for} \quad z\in \mathbb{C}\backslash[0,\beta],
\end{equation}
where $V_\textbf{t}(y)$ is given in \eqref{V-exponent}, $\Gamma_z$ is a positively oriented contour containing $[0,\beta]$
and $z$ in its interior. The endpoint $\beta$ is determined by the
equation
\begin{equation}\label{beta-def}
\int_0^\beta
 V_\textbf{t}'(x)\sqrt{\frac{x}{\beta-x}}dx=2\pi.
\end{equation}
\end{thm}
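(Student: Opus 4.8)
The plan is to use the standard characterization of $\mu_V$ as the unique minimizer, which is equivalent to the Euler--Lagrange variational conditions: there is a constant $\ell$ with $2\int \log|x-y|\,d\mu_V(y) - V_{\textbf t}(x) = \ell$ for $x$ in the support and $\le \ell$ for all $x\in[0,\infty)$. By uniqueness it suffices to exhibit one probability measure of the asserted form satisfying these conditions; it must then coincide with $\mu_V$. I would encode everything in the resolvent $G(z)=\int_0^\beta \frac{d\mu_V(y)}{z-y}$, whose required properties are: $G$ analytic on $\mathbb{C}\setminus[0,\beta]$; $G(z)=1/z+O(1/z^2)$ as $z\to\infty$ (total mass one); $G_+(x)+G_-(x)=V_{\textbf t}'(x)$ on $(0,\beta)$ (the differentiated equality condition, via Plemelj); and $G_+(x)-G_-(x)=-2\pi i\,\psi_V(x)$. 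The square-root endpoints---the factor $x^{-1/2}$ at the hard edge $0$ and $(\beta-x)^{1/2}$ at the soft edge $\beta$---are dictated respectively by the behaviour forced at the origin and by requiring $\psi_V$ to vanish at $\beta$.

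To solve this scalar problem explicitly I would introduce $R(z)=\bigl(\frac{z-\beta}{z}\bigr)^{1/2}$, analytic on $\mathbb{C}\setminus[0,\beta]$ and normalized so that $R(\infty)=1$; it satisfies $R_+=-R_-$ on $(0,\beta)$. Because of this sign flip, the ansatz $G(z)=\frac12 V_{\textbf t}'(z)-\frac12 R(z)h(z)$ automatically gives $G_++G_-=V_{\textbf t}'$ for any $h$ analytic across the cut, while the jump $G_+-G_-$ reproduces exactly $\psi_V(x)=\frac1{2\pi}\sqrt{\frac{\beta-x}{x}}h(x)$. Demanding that $G$ be analytic off $[0,\beta]$ forces $h$ to be entire, and matching the polynomial growth of $\frac12 V_{\textbf t}'$ against $\frac12 Rh$ at infinity pins $h$ down uniquely. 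Deforming the contour $\Gamma_z$ to infinity and reading off the residue at infinity then shows that the contour integral in the statement is precisely this $h$ and that it is a polynomial of degree $\nu-1$. Collapsing $\Gamma_z$ onto the cut converts the same formula into $G(z)=\frac{R(z)}{2\pi}\int_0^\beta \frac{V_{\textbf t}'(y)}{y-z}\sqrt{\frac{y}{\beta-y}}\,dy$, whose large-$z$ expansion has vanishing constant term automatically; the requirement that the coefficient of $1/z$ equal one (total mass) is, after evaluating the limit, exactly the endpoint equation \eqref{beta-def}.

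The remaining and, I expect, hardest part is to check that this candidate is a genuine probability measure satisfying the variational inequality, and to do so uniformly for $\textbf t\in\mathbb{T}(\mathcal{T},\gamma)$. Two things must be established: (i) positivity $h(x)>0$ on $[0,\beta]$---equivalently that we are in the one-cut regular regime; and (ii) the strict inequality $2\int\log|x-y|\,d\mu_V(y)-V_{\textbf t}(x)<\ell$ on $(\beta,\infty)$, which amounts to controlling the sign of the antiderivative of $R(x)h(x)$ (the analytic continuation of $\psi_V$) beyond the soft edge, together with the correct inequality near the hard edge at $0$. This is exactly where the constraints defining $\mathbb{T}(\mathcal{T},\gamma)$ enter: taking $\mathcal{T}$ small keeps $\textbf t$ close to $\textbf 0$, where $V_{\textbf t}(\lambda)\approx\lambda$ is the pure Laguerre weight whose equilibrium density is the explicit Marchenko--Pastur law (positive on its support, with $\beta$ solving \eqref{beta-def} at $\textbf t=\textbf 0$), while taking $\gamma$ large forces $t_\nu$ to dominate $\sum_{j<\nu}|t_j|$, which prevents extra cuts from opening.

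Accordingly I would run a perturbation/continuity argument: first solve \eqref{beta-def} for $\beta=\beta(\textbf t)$ near the Laguerre value by the implicit function theorem, using $t_\nu>0$ and the monotonicity in $\beta$ of the left-hand side of \eqref{beta-def}; then propagate the positivity of $h$ and the variational inequality from $\textbf t=\textbf 0$ by continuity over the compact parameter region, shrinking $\mathcal{T}$ and enlarging $\gamma$ as needed. The delicate point throughout is keeping every estimate uniform in $\textbf t$, so that a single pair $(\mathcal{T},\gamma)$ works for the whole family; once that uniform positivity and the inequality are secured, the measure $d\mu_V=\psi_V\,dx$ meets all the Euler--Lagrange conditions and is therefore the equilibrium measure, completing the proof.
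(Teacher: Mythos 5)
Your outline is essentially correct, but it takes a genuinely different route from the paper. You construct the measure directly: write down the resolvent ansatz $G(z)=\tfrac12 V_{\textbf t}'(z)-\tfrac12 R(z)h(z)$ with $R(z)=\sqrt{(z-\beta)/z}$, read off $h$ as minus the residue at infinity (hence a polynomial of degree $\nu-1$), obtain \eqref{beta-def} from the normalization $G(z)\sim 1/z$, and then verify the Euler--Lagrange conditions. The paper instead avoids solving the hard-edge problem altogether: it substitutes $x\mapsto x^2$, passes to the even potential $W_{\textbf t}(x)=\tfrac12 V_{\textbf t}(x^2)$ on the whole real line, invokes the already-proved one-cut theorem of Ercolani--McLaughlin \cite[Thm.~3.1]{em2003} for that symmetric problem, and transfers the answer back through the identity $\phi_V(x)=|x|\psi_V(x^2)$ of \cite[Lemma~2.2]{ck2008}; the degree count for $h(x)=\tilde h(\sqrt{x})$ then follows from the evenness of $\tilde h$. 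The practical difference is that the part you correctly flag as hardest --- uniform positivity of $h$ on $[0,\beta]$ and the strict variational inequality on $(\beta,\infty)$ for all $\textbf t\in\mathbb{T}(\mathcal{T},\gamma)$, with a single pair $(\mathcal{T},\gamma)$ --- is exactly what the paper inherits for free from the $\mathcal{T}_0,\gamma_0$ statement in \cite{em2003}, whereas you would have to redo that perturbation-from-Marchenko--Pastur argument from scratch (your direct construction is in effect the route of Vanlessen \cite{van2007}, who carries out precisely such a verification). Your approach buys self-containedness and makes the origin of the square-root singularity at the hard edge transparent; the paper's buys brevity and a cleaner explanation of why $h$ is a polynomial in $x$ rather than in $\sqrt{x}$. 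One small correction: at the hard edge $0$ there is no additional variational inequality to check, since $0$ lies in the support and the admissible measures are already confined to $[0,\infty)$; the only off-support condition is the strict inequality on $(\beta,\infty)$.
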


Readers may compare the above theorem with Theorem 3.1 in
\cite{em2003} and find some similarities. The main difference is
that equilibrium measure in \eqref{measure-mu} is supported on
$[0,\beta]$ and possesses a square root singularity at 0, while the measure in \cite[Thm 3.1]{em2003} is
supported on $[\alpha,\beta]$ with $\alpha<0<\beta$ and vanishes like a square root at both endpoints. In fact, our
proof of the above theorem is based on \cite[Thm 3.1]{em2003} and a
nice relation between the equilibrium measure $\mu_V$ and the other one
$\nu_V$, which is the unique minimizer of the weighted energy
\begin{equation}
 I_{V(x^2)/2}(\nu)=\int\int \log \frac{1}{|x-y|}d\nu(x)d\nu(y)+\int \frac{V(x^2)}{2}d\nu(x)
\end{equation}
among all probability measures on $\mathbb{R}$. The relation is given in the following lemma.

\begin{lem}
Let $\psi_V$ be the density of $\mu_V$ and $\phi_V$ be the density of $\nu_V$.
We have
\begin{equation}\label{phi-psi-relation}
 \phi_V(x)=|x|\psi_V(x^2) \qquad \textrm{for } x \in \mathbb{R}.
\end{equation}
\end{lem}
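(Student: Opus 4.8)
The plan is to exploit the quadratic change of variables $x \mapsto x^2$ together with the uniqueness of the equilibrium measure. First I would introduce the candidate measure $\nu$ on $\mathbb{R}$ whose density is the right-hand side of \eqref{phi-psi-relation}, namely $\phi(x) := |x|\psi_V(x^2)$, and then show $\nu = \nu_V$, from which the claimed identity is immediate. Before anything else I would note that, since the external field $V(x^2)/2$ is even, the reflection $x \mapsto -x$ carries $\nu_V$ to another minimizer of $I_{V(x^2)/2}$, so by uniqueness $\nu_V$ is symmetric; the candidate $\phi$ is manifestly even, which is consistent. A one-line substitution $s = x^2$ shows $\int_{\mathbb{R}} \phi(x)\,dx = \int_0^\infty \psi_V(s)\,ds = 1$, so $\nu$ is a probability measure, supported on $[-\sqrt{\beta},\sqrt{\beta}]$.

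The heart of the argument is a transformation law for the logarithmic potential. Writing $U^\mu(x) := \int \log\frac{1}{|x-y|}\,d\mu(y)$, I would compute, splitting the integral over $y>0$ and $y<0$ and using the evenness of $\phi$ (which collapses $\log\frac{1}{|x-y|}+\log\frac{1}{|x+y|}$ into $\log\frac{1}{|x^2-y^2|}$),
\begin{equation}
 U^\nu(x) = \int_{\mathbb{R}} \log\frac{1}{|x-y|}\,|y|\psi_V(y^2)\,dy = \int_0^\infty \log\frac{1}{|x^2-y^2|}\,y\,\psi_V(y^2)\,dy = \frac{1}{2}\,U^{\mu_V}(x^2),
\end{equation}
where the final step substitutes $s = y^2$. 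This identity is the crucial bridge: it converts the potential of $\nu$ on $\mathbb{R}$ into the potential of $\mu_V$ on $[0,\infty)$ evaluated at $x^2$.

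With this in hand the Euler–Lagrange conditions for $\nu$ follow at once. The minimizer $\mu_V$ satisfies $2U^{\mu_V}(s) + V(s) = \ell$ for $s \in [0,\beta]$ and $\geq \ell$ for all $s \in [0,\infty)$, for some constant $\ell$. Substituting $s = x^2$ and applying the potential identity gives, for $x^2 \in [0,\beta]$,
\begin{equation}
 2 U^\nu(x) + \frac{V(x^2)}{2} = U^{\mu_V}(x^2) + \frac{V(x^2)}{2} = \frac{\ell}{2},
\end{equation}
where the last equality uses $2U^{\mu_V}(x^2) + V(x^2) = \ell$; this holds exactly on $\mathrm{supp}(\nu) = [-\sqrt{\beta},\sqrt{\beta}]$, while the inequality $\geq \frac{\ell}{2}$ persists for every $x \in \mathbb{R}$ because $x^2 \geq 0$. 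These are precisely the variational conditions characterizing the minimizer $\nu_V$, so uniqueness of the equilibrium measure forces $\nu = \nu_V$ and hence $\phi_V(x) = |x|\psi_V(x^2)$.

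I expect the main difficulty to lie not in the algebra but in justifying the potential identity and the variational characterization rigorously. One must check that splitting the $y<0$ part and substituting is legitimate near the logarithmic singularity and at $y=0$ (where $\phi$ vanishes like $|x|$, so the singularity is harmless), and one must invoke the standard fact that the Euler–Lagrange conditions uniquely determine the equilibrium measure on the unbounded set $\mathbb{R}$ once the field grows fast enough at infinity—here $V(x^2)/2$ grows like $x^{2\nu}$, so admissibility is clear. A secondary point is to confirm that $\mathrm{supp}(\nu_V)$ is exactly the symmetric interval $[-\sqrt{\beta},\sqrt{\beta}]$, which is forced by the equality case of the variational condition holding precisely on that set.
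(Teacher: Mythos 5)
Your argument is correct, but it is worth noting that the paper does not actually prove this lemma: it simply cites Claeys--Kuijlaars \cite{ck2008}, Lemma 2.2. What you have written is, in essence, a self-contained reconstruction of the standard proof of that cited result. The three ingredients --- the evenness/normalization check, the logarithmic-potential transformation law $U^{\nu}(x)=\tfrac12 U^{\mu_V}(x^2)$ obtained from $|x-y|\,|x+y|=|x^2-y^2|$ after symmetrizing and substituting $s=y^2$, and the verification that the candidate measure satisfies the Euler--Lagrange conditions for the field $V(x^2)/2$ with constant $\ell/2$, followed by uniqueness --- are exactly the right ones, and the bookkeeping of factors of $2$ is done correctly. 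Compared with the paper's one-line citation, your route buys a proof that is independent of the reference and makes transparent why the hard-edge square-root singularity of $\psi_V$ at $0$ turns into a regular interior point of $\nu_V$.

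Two small points of rigor. First, the Euler--Lagrange characterization should be stated quasi-everywhere, and the converse direction (conditions imply minimality) requires the candidate measure to have compact support and finite logarithmic energy; both are immediate here because $\phi(x)=|x|\psi_V(x^2)=\tfrac{1}{2\pi}\sqrt{\beta-x^2}\,h(x^2)$ is a bounded, compactly supported density. Second, and relatedly, your parenthetical remark that $\phi$ ``vanishes like $|x|$'' at the origin is not right: the factor $|x|$ exactly cancels the $x^{-1}$ singularity of $\psi_V(x^2)$, so $\phi(0)=\tfrac{1}{2\pi}\sqrt{\beta}\,h(0)>0$. This does not affect the argument --- boundedness of $\phi$ near $0$ is all that is needed to justify the substitution in the potential identity --- but the stated reason for harmlessness should be corrected.
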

\begin{proof} See \cite[Lemma 2.2]{ck2008}. \end{proof}

With the above Lemma, we can prove Theorem \ref{thm-measure}

\bigskip

\noindent\emph{Proof of Theorem \ref{thm-measure}.} Let $W_\textbf{t}(x)$ be
the following potential supported on the whole real axis
\begin{align}\label{potential-W}
 W_\textbf{t}(x)= \frac{1}{2}V_\textbf{t}(x^2)=\frac{1}{2}x^2+\sum_{k=1}^\nu \frac{t_k}{2}x^{2k} \quad\quad\quad x \in \mathbb{R}.
\end{align}
Then according to \cite[Thm. 3.1]{em2003}, its corresponding
equilibrium measure is given by
\begin{equation}
  d \nu_W= \frac{1}{2\pi}\chi_{[\tilde{\alpha},\tilde{\beta}]}(x)\sqrt{(x-\tilde{\alpha})(\tilde{\beta}-x)} \; \tilde{h}(x) dx,
\end{equation}
where $\tilde{h}(z)$ is a polynomial of degree $2\nu -2$ and defined as
\begin{equation}\label{tilde-h}
\tilde{h}(z)=\frac{1}{2\pi i} \oint_{\tilde\Gamma_z} \frac{W_\textbf{t}'(s)}{\sqrt{(s-\tilde{\alpha})(s-\tilde{\beta})}}\frac{ds}{s-z}.
\end{equation}
Here the integral is taken on a positively oriented contour $\tilde\Gamma_z$ containing $(\tilde \alpha, \tilde\beta)$ and $z$ in its interior,
the endpoints $\tilde{\alpha}$ and $\tilde\beta$ are determined by the following two integrals
\begin{equation} \label{endpoint-EM}
 \int_{\tilde{\alpha}}^{\tilde{\beta}}
 \frac{W_\textbf{t}'(s)}{\sqrt{(s-\tilde{\alpha})(\tilde{\beta}-s)}}ds=0
 \quad \textrm{and} \quad \int_{\tilde{\alpha}}^{\tilde{\beta}}
 \frac{sW_\textbf{t}'(s)}{\sqrt{(s-\tilde{\alpha})(\tilde{\beta}-s)}}ds=2\pi.
\end{equation}
Recall \eqref{potential-W}, $W_\textbf{t}'(x)$ is an odd function. Then the above formulas gives us $ \tilde{\alpha}=-\tilde{\beta}$ with $\tilde\beta>0$ and
\begin{equation}
  d \nu_W= \frac{1}{2\pi}\chi_{[-\tilde{\beta},\tilde{\beta}]}(x)\sqrt{\tilde{\beta}^2-x^2} \; \tilde{h}(x) dx.
\end{equation}
Using \eqref{phi-psi-relation}, we obtain \eqref{measure-mu} with
$\beta=\tilde{\beta}^2$ and $h(x) =\tilde{h}(\sqrt{x})$. In fact,
one can easily verify that \eqref{beta-def} and \eqref{endpoint-EM}
are consistent with the relations \eqref{potential-W}. To see that $h(x) =\tilde{h}(\sqrt{x})$ is a
polynomial in $x$ of degree $\nu -1$, we recall that $W'_\textbf{t}(s)$ is a
polynomial of odd powers. Then calculating residue at $\infty$ in
\eqref{tilde-h}, it is easily seen that $\tilde{h}(z)$ is a
polynomial of even powers. As a consequence, $h(x)
=\tilde{h}(\sqrt{x})$ is a polynomial in $x$ of degree $\nu -1$.
This finishes the proof of our theorem.  \hfill $\Box$

\section{Deift-Zhou steepest descent analysis} \label{sec-rhp&analysis}

In the standard Deift-Zhou steepest descent analysis, one introduces a sequence of
transformations:
\[Y\rightarrow U\rightarrow T,\]
such that $T(z)$ satisfies a RH problem with simplified jump conditions.
Then, when $N$ is large, some parametricies $ T^{(A)}(z)$ are constructed to
approximate $T(z)$ in different regions of the complex-$z$ plane. As the above transformations are revertible, one gets large-$N$ asymptotics for $Y(z)$ from $ T^{(A)}(z)$.

For our problem, since the equilibrium measure in \eqref{measure-mu} is supported on one interval, this is the so-called one-cut case. The Riemann-Hilbert analysis is similar to that done by Vanlessen in \cite{van2007}. So we only give a sketch for the completeness of this paper. The interested readers may refer to \cite{van2007} for details.

\bigskip

\noindent\textbf{Normalization: $Y \to U$.} The first transformation in the Deift-Zhou steepest descent analysis
is to normalize the large-$z$ behavior of $Y(z)$ in \eqref{Y-large} and make it tend to the identity matrix. To achieve it, we introduce the following $g$-function
\begin{equation} 
 g(z):=\int \log(z-x) \psi_V(x) dx = \frac{1}{2\pi} \int_0^\beta \log(z-x) \sqrt{\frac{\beta-x}{x}} \, h(x) dx , \quad  z\in \mathbb{C}\backslash(-\infty, \beta],
\end{equation}
where the principal branch of the logarithm is taken. It is obvious
that $g(z)$ behaves like $\log z$ when $z$ is large. Then, the first transformation $Y \to U$ is defined as
\begin{equation}\label{YtoU}
 U(z)=e^{-\frac{Nl_V}{2}\sigma_3}Y(z)e^{-Ng(z)\sigma_3}e^{\frac{Nl_V}{2}\sigma_3}, \quad\quad z\in \mathbb{C}\backslash\mathbb{R},
\end{equation}
such that the large-$z$ behavior of $U(z)$ is $U(z) = I + O(z^{-1})$ as $z \to \infty$. Here $\sigma_3$ is the Pauli matrix $\begin{pmatrix}
  1 &0 \\ 0 & -1
\end{pmatrix}$.

\bigskip

\noindent\textbf{Opening of the lens: $U \to T$.} In the second transformation $U \to T$, we deform the original interval $[0,\infty)$ and open lens. With this transformation, the rapidly oscillatory jump matrices for $U$ will be reduced to jump matrices who tend to $I$ exponentially except in the neighbourhood of $(0,\beta)$.

Before we introduce the transformation, we need some auxiliary functions. Define
\begin{equation}
  \varphi (z):= \frac{1}{2} \sqrt{\frac{z-\beta}{z}} h(z), \qquad  \textrm{for } z\in \mathbb{C}\backslash[0,\beta],
\end{equation}
where the principal branch of the square root is taken, and
\begin{equation} 
 \xi(z):= -  \int_\beta^z \varphi(y) dy =-\frac{1}{2} \int_\beta^z \sqrt{\frac{y-\beta}{y}}h(y) dy\quad\quad \textrm{for } z\in \mathbb{C}\backslash(-\infty,\beta],
\end{equation}
where the path of integration does not cross the real axis and the
principal branch of the square root is taken. The $\xi$-function satisfies the following properties.

\begin{prop} \label{xi-prop}
  For $x \in \mathbb{R}$, we have
  \begin{eqnarray}
    && \xi_+(x)-\xi_-(x)=2\pi i \qquad \textrm{for } x\in (-\infty, 0), \\
    && 2\xi_+(x)=-2\xi_-(x)=g_+(x)-g_-(x) \qquad \textrm{for } x\in (0,\beta), \\
    && \xi(x)<0 \qquad \textrm{for }  x \in (\beta, \infty).
  \end{eqnarray}
  Moreover, there exists a $\delta>0$ such that
  \begin{equation}
    \re \xi(z) >0\quad\quad \textrm{for } 0<|\im  z|<\delta, \quad 0<\re z <\beta.
  \end{equation}
  and
  \begin{equation} \label{g and v}
    2\xi(z)=2g(z)-V_\textbf{t}(z)-l_V, \quad\quad\textrm{for }z \in \mathbb{C} \setminus [\beta, \infty),
  \end{equation}
  where $l_V$ is a constant.
\end{prop}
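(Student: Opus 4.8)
\emph{Proof strategy.} The pivot is the last identity \eqref{g and v}; once it is available, every jump and sign relation in the proposition drops out of it together with elementary boundary-value computations. The plan is to prove \eqref{g and v} first, and to do so in differentiated form. Both $g$ and $\xi$ are analytic on $\mathbb{C}\setminus(-\infty,\beta]$, so I would establish
\begin{equation}
 g'(z)=\tfrac12 V_\textbf{t}'(z)+\xi'(z)=\tfrac12 V_\textbf{t}'(z)-\varphi(z),\qquad z\in\mathbb{C}\setminus[0,\beta],
\end{equation}
and then integrate, the constant of integration being $\tfrac12 l_V$. To prove this derivative identity, set $F(z):=\tfrac12 V_\textbf{t}'(z)-\varphi(z)-g'(z)$. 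Since $g'(z)=\int_0^\beta \psi_V(x)(z-x)^{-1}dx$, the Sokhotski--Plemelj formula gives the jump $g_+'-g_-'=-2\pi i\,\psi_V$ on $(0,\beta)$, while from $\varphi_\pm(x)=\pm\tfrac{i}{2}\sqrt{(\beta-x)/x}\,h(x)=\pm i\pi\psi_V(x)$ one reads off $\varphi_+-\varphi_-=2\pi i\,\psi_V$; hence $F$ has no jump across $(0,\beta)$ and continues analytically there. Near $0$ and $\beta$ the singularities of $\varphi$ and $g'$ are at worst of square-root type, too weak to support a pole, so they are removable and $F$ is entire. The one substantive point is the behaviour at infinity: $g'(z)\to0$, and I must check $\tfrac12 V_\textbf{t}'(z)-\varphi(z)=O(1/z)$. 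This is precisely what the contour-integral definition of $h$ in Theorem~\ref{thm-measure} and the endpoint equation \eqref{beta-def} are built to guarantee --- evaluating the residue at $y=z$ in the formula for $h$ reproduces $\sqrt{z/(z-\beta)}\,V_\textbf{t}'(z)$, so that $\varphi(z)=\tfrac12 V_\textbf{t}'(z)+O(1/z)$, with the $O(1/z)$ coefficient pinned down by the normalization $\int_0^\beta\psi_V=1$. With $F$ entire and $F(z)\to0$, Liouville's theorem forces $F\equiv0$, and integrating yields \eqref{g and v} on $\mathbb{C}\setminus(-\infty,\beta]$, extended to boundary values on $\mathbb{R}$.

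Granting \eqref{g and v}, the first two relations are immediate. On $(-\infty,0)$ the functions $V_\textbf{t}$ and $l_V$ are analytic, so $\xi_+-\xi_-=g_+-g_-$; and for $x<0$, since $x-y<0$ for every $y\in(0,\beta)$, the logarithm in $g$ jumps by $2\pi i$, giving $g_+-g_-=2\pi i\int_0^\beta\psi_V=2\pi i$, which is the first relation. On $(0,\beta)$ the evenness $\varphi_++\varphi_-=0$ gives $\xi_++\xi_-=-\int_\beta^x(\varphi_++\varphi_-)\,dy=0$, i.e. $\xi_+=-\xi_-$, while the logarithmic jump of $g$ is $g_+-g_-=2\pi i\int_x^\beta\psi_V=\xi_+-\xi_-$; combining these yields $2\xi_+=-2\xi_-=g_+-g_-$, the second relation.

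For the third relation, note $\xi(\beta)=0$ and, for real $x>\beta$, $\xi'(x)=-\varphi(x)=-\tfrac12\sqrt{(x-\beta)/x}\,h(x)$. Here I would use that $h>0$ not only on $[0,\beta]$ (as in Theorem~\ref{thm-measure}) but on all of $(\beta,\infty)$; for $\textbf{t}\in\mathbb{T}(\mathcal{T},\gamma)$ with $\mathcal{T}$ small this is a perturbation of the Marchenko--Pastur case $\textbf{t}=\textbf{0}$, where $h\equiv1$, the positive leading coefficient of $h$ controlling large $x$. Then $\xi'(x)<0$, so $\xi$ decreases from $\xi(\beta)=0$ and $\xi(x)<0$ on $(\beta,\infty)$. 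Finally, for the strip estimate, $u:=\re\xi$ is harmonic and vanishes on $(0,\beta)$ by the second relation; its upward normal derivative there is $\partial_y u|_{y=0^+}=-\im\xi_+'=\tfrac12\sqrt{(\beta-x)/x}\,h(x)>0$, and since $\xi(\bar z)=\overline{\xi(z)}$ the function $u$ is even in $\im z$. Hence $u>0$ just above each point of $(0,\beta)$, and by symmetry just below, giving $\re\xi>0$ for $0<|\im z|<\delta$.

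The main obstacle is the large-$z$ cancellation $\tfrac12 V_\textbf{t}'-\varphi=O(1/z)$ underpinning \eqref{g and v}: this is the only place where the precise construction of $h$ and the defining equation for $\beta$ genuinely enter, and it has to be verified by a careful residue and asymptotic expansion rather than by soft arguments; everything else reduces to Plemelj jumps, the evenness $\xi(\bar z)=\overline{\xi(z)}$, and the positivity of $h$. A secondary difficulty is upgrading the pointwise sign $\re\xi>0$ to a \emph{uniform} band $0<|\im z|<\delta$ holding up to the endpoints $0$ and $\beta$, which requires the local square-root behaviour of $\xi$ there (equivalently the conformal maps used to build the Bessel and Airy parametrices) so that $\delta$ may be chosen independently of $\re z\in(0,\beta)$.
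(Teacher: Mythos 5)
Your proposal is correct and is, in substance, the argument the paper outsources to \cite[Sec.~3.4]{van2007}: the Liouville-theorem identification $g'(z)=\tfrac12 V_\textbf{t}'(z)-\varphi(z)$ (which works because $h$ is by construction the polynomial part at infinity of $\sqrt{y/(y-\beta)}\,V_\textbf{t}'(y)$, so that $\tfrac12 V_\textbf{t}'-\varphi=O(1/z)$), followed by integration to obtain \eqref{g and v} and then Plemelj-type boundary-value computations combined with the symmetry $\xi(\bar z)=\overline{\xi(z)}$. The only two points you leave open --- strict positivity of $h$ on $(\beta,\infty)$ and the uniformity of $\delta$ up to the endpoints via the local $(z-\beta)^{3/2}$ and $z^{1/2}$ behaviour of $\xi$ --- are precisely the ones you flag yourself, and they are supplied by the perturbative construction of the equilibrium measure for $\textbf{t}\in\mathbb{T}(\mathcal{T},\gamma)$ (Theorem~\ref{thm-measure} together with \cite{em2003}) and by the conformal maps underlying the Airy and Bessel parametrices.
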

\begin{proof}
  The proof is similar to the analysis in \cite[Sec. 3.4]{van2007}.
\end{proof}

The second transformation $U \to T$ is defined as
\begin{equation} \label{UtoT}
T(z):= \begin{cases}
  U(z), & \text{for $z$ outside the lens-shaped region} \\
  U(z) \left( \begin{matrix} 1 & 0 \\ -z^{-\alpha} e^{-2N \xi(z)} & 1
    \end{matrix} \right), & \text{for $z$ in the upper lens region}, \\
  U(z) \left( \begin{matrix} 1 & 0 \\ z^{-\alpha} e^{-2N\xi(z)} & 1
\end{matrix} \right), & \text{for $z$ in the lower lens region};
\end{cases}
\end{equation}
see Fig. \ref{fig-gamma}.
\begin{figure}[h]
\begin{center}
\includegraphics[width=200pt]{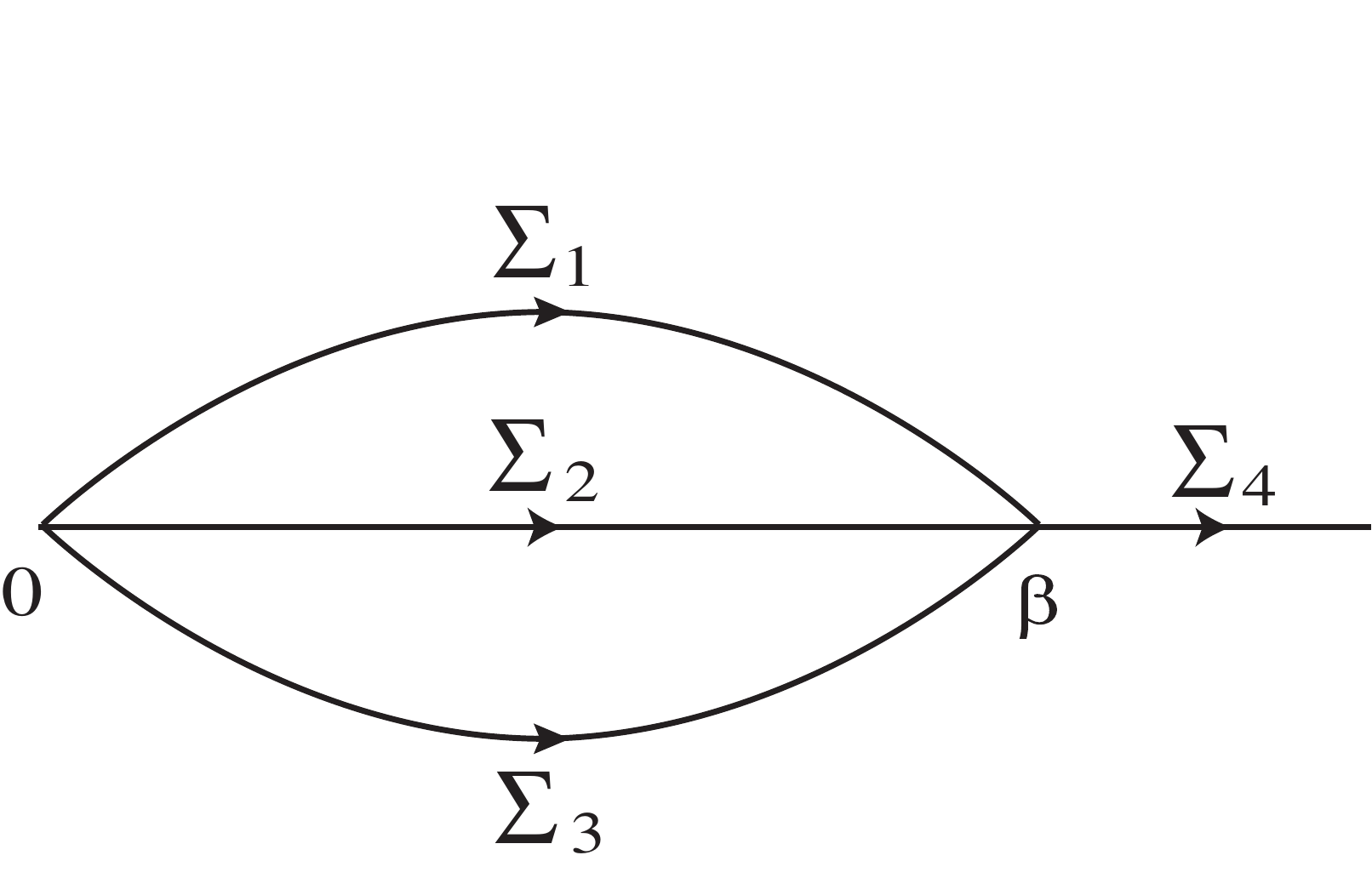}
\end{center}
\caption{ Contour $\Sigma_T= \bigcup_{i=1}^4\Sigma_i$} \label{fig-gamma}
\end{figure}
Then, $T$ satisfies a RH problem with the following jump conditions $J_T(z)$
\begin{equation} \label{jump-jt}
  J_T(z)= \begin{cases}
    \left(\begin{matrix} 1&\quad 0\\z^{-\alpha}e^{-2N\xi(z)}&\quad 1\end{matrix}\right), & z \in \Sigma_1\cup \Sigma_3 \\
    \left(\begin{matrix} 0&\quad z^\alpha\\-z^{-\alpha}&\quad 0\end{matrix}\right), & z \in (0, \beta) \\
    \left(\begin{matrix} 1&\quad z^{\alpha}e^{2N\xi(z)}\\0&\quad 1\end{matrix}\right), & z \in (\beta, \infty).
  \end{cases}
\end{equation}
Of course $T(z)$ is still normalized at $\infty,$ that is $
T(z)=I+O\left(z^{-1}\right)$ as $z\to \infty.
$

\subsection{Outside parametrix}

According to the properties of $\xi(z)$ in Proposition \ref{xi-prop}, we can choose $\Sigma_1$ and $\Sigma_3$ such that
$\re \xi(z)>0$ for $z\in \Sigma_1\cup\Sigma_3$, and $\re \xi(x)<0$ for $x\in(\beta,\infty)$. Thus, $J_T(z)\to I$ exponentially as $N\to\infty$
for $z $ bounded away from $[0,\beta]$. It suggests that, for large $N$, the solution of the RH problem for $T(z)$ behaves
asymptotically like the solution of the following RH problem for
$T^{(\infty)}(z)$:
\begin{itemize}
\item[(a)]  $T^{(\infty)}(z)$ is analytic in $\mathbb{C}\setminus
[0,\beta]$;

\item[(b)]  for $x\in (0,\beta)$,
\begin{equation} 
  T^{(\infty)}_{+}(x)=T^{(\infty)}_{-}(x)  \left( \begin{matrix} 0 & x^\alpha \\
-x^{-\alpha} & 0
\end{matrix} \right);
\end{equation}

\item[(c)]  $T^{(\infty)}(z)=I+O(z^{-1})$, as $z\to\infty$.

\end{itemize}
The solution to the above RH problem is given explicitly as follows
\begin{equation} \label{out-para}
  T^{(\infty)}(z)=2^{-\alpha \sigma_3} \left(\begin{matrix}
\frac{a(z)+a^{-1}(z)}{2}&\quad
\frac{a(z)-a^{-1}(z)}{2i}\\\frac{a(z)-a^{-1}(z)}{-2i}&\quad
\frac{a(z)+a^{-1}(z)}{2}\end{matrix}\right)D(z)^{-\sigma_3},
\quad\quad z\in \mathbb{C}\backslash[0,\beta]
\end{equation}
with
\begin{equation} \label{fun-a(z)}
  a(z)=\frac{(z-\beta)^{\frac{1}{4}}}{z^{\frac{1}{4}}} \qquad \textrm{for } z\in \mathbb{C}\backslash[0,\beta]
\end{equation}
and
\begin{equation} \label{fun-d(z)}
  D(z)=\frac{z^{\frac{\alpha}{2}}}{\phi(z)^{\frac{\alpha}{2}}},
\quad\quad \phi(z)=2z-\beta+2z^\frac{1}{2}(z-\beta)^{\frac{1}{2}} \qquad \textrm{for } z\in \mathbb{C}\backslash[0,\beta],
\end{equation}
where principal branches of powers are taken in the above formulas.

Note that the above parametrix is not valid in the neighbourhoods of endpoints $0$ and $\beta$, since the jump matrices do not tend to $I$ as $N \to \infty$. So, some local parametrix constructions are needed near there two endpoints. As the equilibrium measure has different behaviors near $0$ and $\beta$ (see Theorem \ref{thm-measure}), different parametrices appear in their neighborhoods.  More precisely, we will have an Airy-type parametrix near $\beta$ since the density of equilibrium measure vanishes like a square root at $\beta$; and a Bessel-type parametrix near 0 as the density has a square root singularity there.

\subsection{Local parametrix near $\beta$}

This parametrix is constructed in terms of Airy functions as follows
\begin{equation}
 T^{(\beta)}(z)=E^{(\beta)}(z)\Psi^{(\beta)}( f(z))e^{-N\xi(z)\sigma_3}z^{-\frac{1}{2}\alpha\sigma_3},
\end{equation}
where
\begin{equation}
 E^{(\beta)}(z)=T^{(\infty)}(z) z^{\frac{1}{2}\alpha
 \sigma_3}e^{\frac{\pi i}{4}\sigma_3}
 \frac{1}{\sqrt{2}}\left(\begin{matrix} 1& -1\\1&
 1\end{matrix}\right) f(z)^{\frac{\sigma_3}{4}},
\end{equation}
\begin{equation}
f(z):=\left(\frac{3N}{2} \int_\beta^z \varphi(s) ds\right)^\frac{2}{3} =\left(\frac{3N}{4} \int_\beta^z \sqrt{\frac{s-\beta}{s}}h(s) ds\right)^\frac{2}{3}
\end{equation}
for $ z\in \mathbb{C}\backslash(-\infty,\beta],$ and
\begin{equation}
 \Psi^{(\beta)}(z)=\sqrt{2\pi}\,e^{-\frac{\pi i}{12}}
 \times \begin{cases}
    \left(\begin{matrix} \Ai(z)&\quad \Ai(\omega^2z)\\ \Ai'(z)&\quad \omega^2 \Ai'(\omega^2z)\end{matrix}\right)e^{-\frac{\pi i}{6}\sigma_3}, & z \in \rm{I} \\
    \left(\begin{matrix} \Ai(z)&\quad \Ai(\omega^2z)\\ \Ai'(z)&\quad \omega^2\Ai'(\omega^2z)\end{matrix}\right)e^{-\frac{\pi i}{6}\sigma_3}\left(\begin{matrix} 1& 0\\-1&1\end{matrix}\right), & z \in \rm{II} \\
    \left(\begin{matrix} \Ai(z)&\quad -\omega^2\Ai(\omega z)\\ \Ai'(z)&\quad -\Ai'(\omega z)\end{matrix}\right)e^{-\frac{\pi i}{6}\sigma_3}\left(\begin{matrix} 1& 0\\1& 1\end{matrix}\right), & z \in \rm{III} \\
    \left(\begin{matrix} \Ai(z)&\quad -\omega^2\Ai(\omega z)\\ \Ai'(z)&\quad -\Ai'(\omega z)\end{matrix}\right)e^{-\frac{\pi i}{6}\sigma_3}, & z \in \rm{IV} \\
  \end{cases}
\end{equation}
with $\omega = e^{\frac{2}{3}\pi i }$; see \cite[eq.(3.62)]{van2007}. Here the regions I$-$IV are depicted in Fig. \ref{contour-airy}. For properties of Airy functions, one may refer to \cite{dlmf}.
\begin{figure}[h]
\begin{center}
\includegraphics[width=150pt]{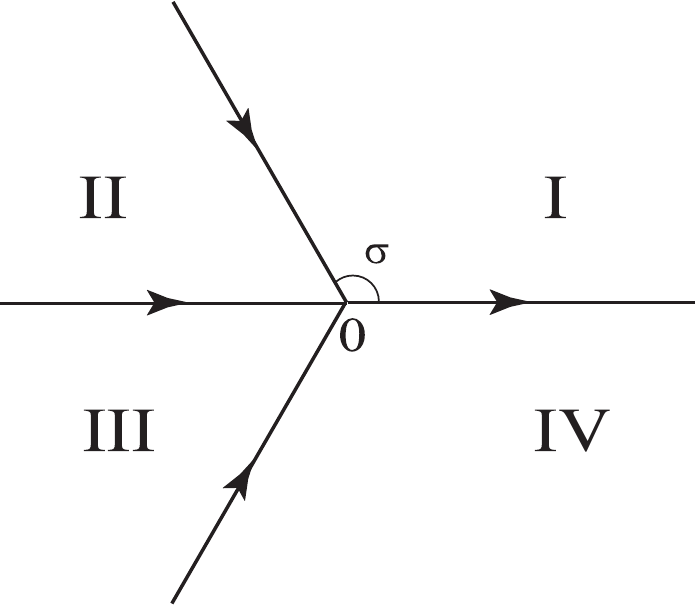}
\end{center}
\caption{Regions for the Airy parametrix with the angle $\sigma\in (\frac{\pi}{3}, \pi)$} \label{contour-airy}
\end{figure}

\subsection{Local parametrix near 0}

This parametrix is constructed in terms of Bessel functions as follows
\begin{equation} \label{T-0-bessel}
  T^{(0)}(z)=E^{(0)}(z)\Psi^{(0)}(\tilde{f}(z))e^{-N\xi(z)\sigma_3}(-z)^{-\frac{1}{2}\alpha\sigma_3},
\end{equation}
where
\begin{equation}
 E^{(0)}(z)=(-1)^N T^{(\infty)} (z) (-z)^{\frac{1}{2}\alpha
 \sigma_3}\frac{1}{\sqrt{2}}\left(\begin{matrix} 1&\quad i\\i&\quad
 1\end{matrix}\right)\tilde{f}(z)^{\frac{\sigma_3}{4}}(2\pi)^{\frac{\sigma_3}{2}},
\end{equation}
\begin{equation} \label{f-bessel-def}
 \tilde{f}(z):=\left(\frac{N }{2}\int_0^z
 \varphi(s)ds\right)^2 = \left(\frac{N}{4}\int_0^z
 \sqrt{\frac{s-\beta}{s}}h(s) ds\right)^2
\end{equation}
for $z\in \mathbb{C}\backslash[0,\infty)$, and
\begin{equation} 
 \Psi^{(0)}(z)= \begin{cases}
    \left(\begin{matrix} I_\alpha(2z^{\frac{1}{2}})&  -\frac{i}{\pi} K_\alpha(2z^{\frac{1}{2}})\\-2\pi i z^{\frac{1}{2}}I_\alpha'(2z^{\frac{1}{2}})&  -2z^{\frac{1}{2}}K_\alpha'(2z^{\frac{1}{2}})\end{matrix}\right), & z \in \rm{I'} \\
    \left(\begin{matrix} \frac{1}{2}H_\alpha^{(1)}(2(-z)^{\frac{1}{2}})&  -\frac{1}{2} H_\alpha^{(2)}(2(-z)^{\frac{1}{2}})\\-\pi z^{\frac{1}{2}}(H_\alpha^{(1)})' (2(-z^{\frac{1}{2}}))&  \pi z^{\frac{1}{2}}(H_\alpha^{(2)})' (2(-z^{\frac{1}{2}}))\end{matrix}\right) e^{\frac{\alpha \pi i}{2} \sigma_3}, & z \in \rm{II'} \\
    \left(\begin{matrix} \frac{1}{2}H_\alpha^{(2)}(2(-z)^{\frac{1}{2}})&  \frac{1}{2} H_\alpha^{(1)}(2(-z)^{\frac{1}{2}})\\ \pi z^{\frac{1}{2}}(H_\alpha^{(2)})' (2(-z^{\frac{1}{2}}))&  \pi z^{\frac{1}{2}}(H_\alpha^{(1)})' (2(-z^{\frac{1}{2}}))\end{matrix}\right)e^{-\frac{\alpha \pi i}{2} \sigma_3}, & z \in \rm{III'};
  \end{cases}
\end{equation}
see \cite[eq.(3.81)]{van2007}. The regions I$'$$-$III$'$ are depicted in Fig. \ref{contour-bessel}. Here $I_\alpha$ and $K_\alpha$ are modified Bessel functions, and $H_\alpha^{(1)}$ and $H_\alpha^{(2)}$ are Hankel functions of the first and second kind, respectively. For properties of these functions, one may refer to \cite{dlmf} again.
\begin{figure}[h]
\begin{center}
\includegraphics[width=150pt]{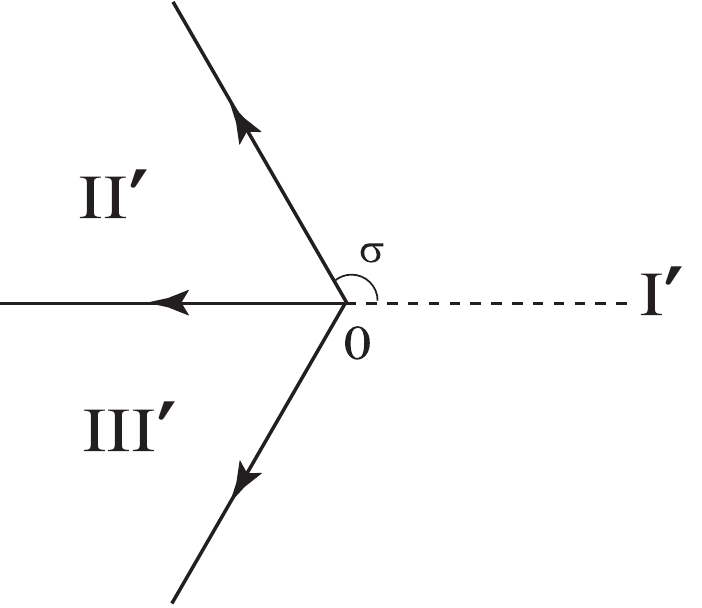}
\end{center}
\caption{Regions for the Bessel parametrix with the angle $\sigma\in (0, \pi)$} \label{contour-bessel}
\end{figure}

\subsection{The error}

Now we have the following approximation for $T(z)$
\begin{equation}
  T^{(A)}(z)= \begin{cases}
    T^{(\beta)}(z) & \textrm{for } z \in
U_\delta\backslash \Sigma_T, \\
T^{(0)}(z) &\textrm{for } z \in
\tilde{U}_\delta\backslash \Sigma_T,\\
T^{(\infty)}(z) &{\rm
elsewhere},
  \end{cases}
\end{equation}
where $U_\delta$ and $\tilde{U}_\delta$ are two disks with constant radius $\delta$, centered at $\beta$ and $0$, respectively. To see the difference between $T(z)$ and $T^{(A)}(z)$, we define
\begin{equation} \label{TtoS}
  S(z):=T(z) T^{(A)}(z)^{-1}.
\end{equation}
Then, $S(z)$ satisfies a RH problem as follows.

\begin{enumerate}

\item[($S_a$)] \quad $S(z)$ is analytic in $\mathbb{C}\backslash \Sigma_S$; see Fig. \ref{contour-s};

\item[($S_b$)]\quad $S_+(z)=S_-(z)J_S(z)$, where
\begin{equation} \label{S-rhp}
  J_S(z)= \begin{cases}
    T^{(0)}(z)T^{(\infty)}(z)^{-1} & \textrm{for } z \in \partial\tilde{U}_\delta\\
    T^{(\beta)}(z)T^{(\infty)}(z)^{-1} & \textrm{for } z \in \partial{U}_\delta\\
    T^{(\infty)}(z)J_T(z)T^{(\infty)}(z)^{-1} & \textrm{for } z \in \Sigma_{1,S}\bigcup \Sigma_{3,S}\bigcup \Sigma_{4,S};
  \end{cases}
\end{equation}

\item[($S_c$)]\quad$S(z)=I+O\left(\frac{1}{z}\right)$ as $z \to \infty$.

\end{enumerate}
\begin{figure}[h]
\begin{center}
\includegraphics[width=280pt]{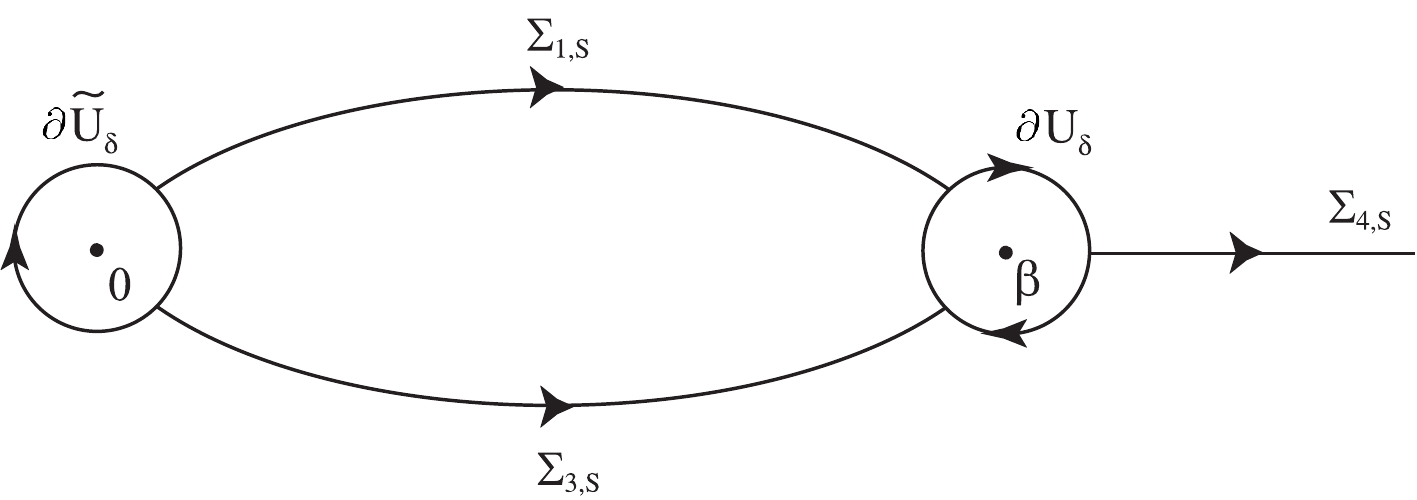}
\end{center}
\caption{Contour $\Sigma_S$} \label{contour-s}
\end{figure}

Based on the formulas for $J_T(z)$ and $T^{(\infty)}(z)$ in \eqref{jump-jt} and \eqref{out-para}, one can see $J_S(z)$ is exponentially close to the identity matrix for $z \in \Sigma_S\backslash (\partial \tilde{U}_\delta \cup \partial U_\delta)$ as $N \to \infty$. For $z \in \partial \tilde{U}_\delta \cup \partial U_\delta$, $J_S(z)$ possess the following asymptotic expansions as $N \to \infty$
\begin{equation} \label{jump-js}
  J_S(z)=\begin{cases}
    T^{(\beta)}(z) T^{(\infty)}(z)^{-1}\sim I + \D\sum_{k=1}^\infty
\frac{1}{N^k}J_k^{(\beta)}(z), & \textrm{for } z \in \partial U_\delta, \vspace{5pt} \\
T^{(0)}(z)T^{(\infty)}(z)^{-1} \sim I+\D\sum_{k=1}^{\infty}\frac{1}{N^k}J_k^{(0)}(z), & \textrm{for } z \in \partial \tilde{U}_\delta,
  \end{cases}
\end{equation}
where $J_k^{(\beta)}(z)$ and $J_k^{(0)}(z)$ are meromorphic functions in $U_\delta$ and $\tilde{U}_\delta$ with a pole at $\beta$ and 0, respectively. Moreover, they have explicit expressions as follows
\begin{equation} \label{jk-beta}
  \hspace{-10pt}J_k^{(\beta)}(z)=\frac{1}{2(\int_\beta^z \varphi(s)ds)^k}T^{(\infty)}(z) z^{\frac{\alpha}{2} \sigma_3} \left(\begin{matrix}
(-1)^k(s_k+t_k)& (s_k-t_k)i \\  (-1)^{k+1}(s_k-t_k)i & s_k+t_k\end{matrix}\right) z^{-\frac{\alpha}{2} \sigma_3} T^{(\infty)}(z)^{-1}
\end{equation}
and
\begin{align}
  J_k^{(0)}(z)& =\frac{(\alpha,k-1)(-1)^k}{2^k (\int_0^z \varphi(s)ds)^k}T^{(\infty)}(z) (-z)^{\frac{\alpha}{2}\sigma_3} \left(\begin{matrix}
\frac{(-1)^k}{k}\left(\alpha^2+ \frac{1}{2}k-\frac{1}{4}\right) &
\left(k-\frac{1}{2}\right)i \\ (-1)^{k+1}\left(k-\frac{1}{2}\right)i &
\frac{1}{k}\left(\alpha^2+ \frac{1}{2}k-\frac{1}{4}\right)\end{matrix}\right) \nonumber \\
& \quad \times (-z)^{-\frac{\alpha}{2}\sigma_3} T^{(\infty)}(z)^{-1}, \label{jk-0}
\end{align}
where
\begin{equation*}
  s_k=\frac{\Gamma(3k+\frac{1}{2})}{54^k k!\Gamma(k+\frac{1}{2})}, \quad  t_k=-\frac{6k+1}{6k-1}s_k, \quad (\alpha,k)=\frac{(4\alpha^2-1)\cdots[4\alpha^2-(2k-1)^2]}{2^{2k}k!}
\end{equation*}
for $ k \geq 1$; see \cite[eq.(3.75)\&(3.97)]{van2007}. Because the jump matrix tends to $I$ as $N \to \infty$, by the standard analysis in \cite{dkmvz2}, the RH problem for $S$ has a unique solution when $N$ is sufficiently large. Moreover, $S(z)$ satisfies the following asymptotic expansion
\begin{equation}
  S(z) \sim I + \sum_{k=1}^\infty N^{-k}S_k(z), \qquad \textrm{as } N \to \infty
\end{equation}
where $S_k(z)$ are bounded functions which are analytic in $\mathbb{C} \setminus ( \partial \tilde{U}_\delta \cup \partial U_\delta)$ and can be computed explicitly.

There is an important observation about explicit forms of $S_k(z)$. From formulas for $T^{(\infty)}(z)$, $J_k^{(\beta)}(z)$ and $J_k^{(0)}(z)$ in \eqref{out-para}, \eqref{jk-beta} and \eqref{jk-0}, one can see that they satisfy nice symmetric properties when $\alpha =0$. Due to these properties, we have the following results for $S_k(z)$, which are crucial in our subsequent analysis.
\begin{lem} \label{lemm-s-pattern}
When $\alpha=0,$ we have
\begin{equation} \label{sk-pattern}
  S_k(z) = \begin{cases}
    s_k^{(1)}(z)I + s_k^{(2)}(z)\sigma_2, & \textrm{when $k$ is even}, \\
    s_k^{(1)}(z)\sigma_3 + s_k^{(2)}(z)\sigma_1, & \textrm{when $k$ is odd},
  \end{cases}
\end{equation}
where $\sigma_1$ and $\sigma_2$ are the Pauli matrices
\begin{equation}
  \sigma_1 = \begin{pmatrix}
    0 & 1 \\ 1 & 0
  \end{pmatrix}, \qquad \sigma_2 = \begin{pmatrix}
    0 & -i \\ i & 0
  \end{pmatrix}.
\end{equation}
\end{lem}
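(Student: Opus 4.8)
The plan is to exploit a $\mathbb{Z}/2$-grading of the algebra $M_2(\mathbb{C})$ of $2\times 2$ matrices induced by the Pauli basis. Write $M_2(\mathbb{C})=E\oplus O$, where $E:=\mathrm{span}\{I,\sigma_2\}$ and $O:=\mathrm{span}\{\sigma_1,\sigma_3\}$, and call a matrix \emph{even} if it lies in $E$ and \emph{odd} if it lies in $O$. A direct check with the relations $\sigma_j^2=I$, $\sigma_1\sigma_2=i\sigma_3$, $\sigma_2\sigma_3=i\sigma_1$, $\sigma_3\sigma_1=i\sigma_2$ shows this is a grading: $E\cdot E\subseteq E$, $O\cdot O\subseteq E$, and $E\cdot O,\,O\cdot E\subseteq O$; in particular $E$ is a commutative subalgebra closed under inversion. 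The first observation is that when $\alpha=0$ the factor $D(z)$ in \eqref{fun-d(z)} equals $1$ and $2^{-\alpha\sigma_3}=I$, so the outside parametrix \eqref{out-para} collapses to $T^{(\infty)}(z)=\tfrac{a+a^{-1}}{2}I+\tfrac{a-a^{-1}}{2}\,i\sigma_2\in E$, and hence $T^{(\infty)}(z)^{-1}\in E$ as well.

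Next I would read off the parity of the jump coefficients. Setting $\alpha=0$ in the central matrices of \eqref{jk-beta} and \eqref{jk-0} and using the identities $AI+iB\sigma_2=\left(\begin{smallmatrix}A&B\\-B&A\end{smallmatrix}\right)$ and $-A\sigma_3+B\sigma_1=\left(\begin{smallmatrix}-A&B\\B&A\end{smallmatrix}\right)$, the $(-1)^k$ and $(-1)^{k+1}$ signs show that each central matrix is even when $k$ is even and odd when $k$ is odd; the scalar prefactors (including $(\alpha,k-1)$ with $\alpha=0$) do not affect the grade. Since conjugation by the even matrix $T^{(\infty)}$ preserves the grading ($E\cdot E\cdot E\subseteq E$ and $E\cdot O\cdot E\subseteq O$), it follows that $J_k^{(\beta)}(z)$ and $J_k^{(0)}(z)$ are even for even $k$ and odd for odd $k$.

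Then I would run the induction through the small-norm recursion. Expanding the jump relation $S_+=S_-J_S$ with $S\sim I+\sum_{k\ge1}N^{-k}S_k$ and $J_S-I\sim\sum_{k\ge1}N^{-k}\Delta_k$ (where $\Delta_k$ equals $J_k^{(\beta)}$ on $\partial U_\delta$ and $J_k^{(0)}$ on $\partial\tilde U_\delta$, the lens contributions in \eqref{S-rhp} being exponentially small), the order-$N^{-k}$ term gives the additive jump
\[
 S_{k,+}(z)-S_{k,-}(z)=\Delta_k(z)+\sum_{j=1}^{k-1}S_{j,-}(z)\,\Delta_{k-j}(z),
\]
while $S_k(z)$ is recovered as the Cauchy transform of this jump along $\partial U_\delta\cup\partial\tilde U_\delta$. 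Assuming inductively that $S_j$ has grade $j\bmod 2$, every product $S_{j,-}\Delta_{k-j}$ carries grade $(j)+(k-j)=k\bmod 2$ by the grading rules, so the whole additive jump—and therefore, by linearity of the Cauchy transform in the \emph{constant} Pauli basis—$S_k(z)$ is even for even $k$ and odd for odd $k$. The base case $k=1$ reads $S_{1,+}-S_{1,-}=\Delta_1\in O$, giving $S_1\in O$. Unfolding $E=\{s^{(1)}I+s^{(2)}\sigma_2\}$ and $O=\{s^{(1)}\sigma_3+s^{(2)}\sigma_1\}$ then yields exactly \eqref{sk-pattern}.

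The routine but slightly delicate points are the sign bookkeeping that pins down the parity of the central matrices in \eqref{jk-beta}--\eqref{jk-0}, and the verification that the decomposition $E\oplus O$ is genuinely a graded algebra; once these are in place the induction is immediate. The only real content is the algebraic observation that $\{I,\sigma_2\}$ and $\{\sigma_1,\sigma_3\}$ split $M_2(\mathbb{C})$ into a $\mathbb{Z}/2$-graded algebra that is preserved under conjugation by $T^{(\infty)}$ precisely when $\alpha=0$; this is exactly the symmetry that fails for $\alpha\ne 0$, since then $D(z)^{-\sigma_3}$ and $z^{\pm\alpha\sigma_3/2}$ no longer lie in $E$.
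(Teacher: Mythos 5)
Your argument is correct and follows essentially the same route as the paper: the same splitting of $M_2(\mathbb{C})$ into $\mathrm{span}\{I,\sigma_2\}$ and $\mathrm{span}\{\sigma_1,\sigma_3\}$, the same observation that $T^{(\infty)}(z)$ lies in the even part when $\alpha=0$, and the same parity computation for $J_k^{(\beta)}$ and $J_k^{(0)}$, after which the paper appeals to the induction of Ercolani--McLaughlin while you write out the small-norm recursion and the Cauchy-transform step explicitly. The only slip is a harmless factor of $i$ in your formula for $T^{(\infty)}$ (the coefficient of $\sigma_2$ is $\tfrac{a-a^{-1}}{2}$, not $\tfrac{a-a^{-1}}{2}\,i$), which does not affect membership in the even part or any subsequent step.
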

\begin{proof}
  When $\alpha=0$, the outside parametrix $T^{(\infty)}(z)$ in \eqref{out-para} is simply given by
\begin{equation}
  T^{(\infty)}(z)= \frac{a(z)+a^{-1}(z)}{2} I + \frac{a(z)-a^{-1}(z)}{2} \sigma_2.
\end{equation}
Then, for $J_k^{(\beta)}(z)$ in \eqref{jk-beta}, we have
\begin{equation}
  \begin{aligned}
  J_k^{(\beta)}(z) & = \frac{2^{k-1}}{(\int_\beta^z \varphi(s)ds)^k}T^{(\infty)}(z) \biggl[ (s_k+t_k) I - (s_k-t_k)\sigma_2 \biggr]T^{(\infty)}(z)^{-1}\\&=\frac{2^{k-1}}{(\int_\beta^z \varphi(s)ds)^k} \biggl[ (s_k+t_k) I - (s_k-t_k)\sigma_2 \biggr], \qquad \textrm{when $k$ is even,} \end{aligned}
\end{equation}
and
\begin{equation}
  \begin{aligned}
  J_k^{(\beta)}(z)&=\frac{2^{k-1}}{(\int_\beta^z \varphi(s)ds)^k}  T^{(\infty)}(z) \biggl[ -(s_k+t_k) \sigma_3 + i(s_k-t_k)\sigma_1 \biggr] T^{(\infty)}(z)^{-1}\\
& = \frac{2^{k-1} }{(\int_\beta^z \varphi(s)ds)^k} \biggl[ -\biggl(a^2(z) t_k+ \frac{s_k}{a^{2}(z)}  \biggr) \sigma_3 - \left(a^2(z) t_k - \frac{s_k}{a^{2}(z)} \right) i \sigma_1 \biggr], \ \textrm{when $k$ is odd}.\end{aligned}
\end{equation}
Similarly, for $J_k^{(0)}(z)$ in \eqref{jk-0}, we get,
\begin{equation}
  J_k^{(0)}(z)=\frac{(0,k-1)(-1)^k}{2^k(\int_0^z \varphi(s)ds)^k} \biggl[ \left(\frac{1}{2} - \frac{1}{4k}\right) I - \left(k - \frac{1}{2}\right)\sigma_2 \biggr] , \qquad \textrm{when $k$ is even,}
\end{equation}
and
\begin{equation}\begin{aligned}
  J_k^{(0)}(z)=&\frac{(0,k-1)(-1)^k}{2^k(\int_0^z \varphi(s)ds)^k} \biggl[ \biggl(\frac{(2k-1)^2 a^2(z)}{8k}- \frac{(2k-1)(2k+1)}{8k\,a^{2}(z)}  \biggr) \sigma_3 \\
 & \qquad + \biggl(\frac{(2k-1)^2 a^2(z)}{8k}+ \frac{(2k-1)(2k+1)}{8k\,a^{2}(z)}  \biggr) i \sigma_1 \biggr], \ \textrm{when $k$ is odd.}\end{aligned}
\end{equation}
According to the above formulas, the jump matrix $J_S(z)$ in \eqref{jump-js} always satisfies an asymptotic expansion with symmetric properties. That is, its even terms are given as a linear combination of $I$ and $\sigma_2$, while its odd terms are expressed as a linear combination of $\sigma_1$ and $\sigma_3$. Therefore, using the same idea as in the proof of \cite[Lemma 3.5]{em2003}, it is not difficult to prove our results by mathematical induction.
This completes the proof of our lemma.
\end{proof}

\begin{rmk} \label{rmk-alpha}
  When $\alpha \neq 0$, it is still possible to calculate $J_k^{(\beta)}(z)$ and $J_k^{(0)}(z)$. But now the formulas are more complicated. For example, when $k$ is even, we have
  \begin{align*}
    J_k^{(\beta)}(z) & =  \frac{2^{k-1}(s_k+t_k)}{(\int_\beta^z \varphi(s)ds)^k}  I  + \frac{2^{k-1}(t_k-s_k)}{(\int_\beta^z \varphi(s) ds)^k} \biggl[(\frac{a+a^{-1}}{2})^2 \begin{pmatrix}
      0 & -i z^\alpha D^{-2} 4^{-\alpha} \\ i z^{-\alpha} D^{2} 4^{\alpha} & 0
    \end{pmatrix} \\
    & - (\frac{a-a^{-1}}{2})^2 \begin{pmatrix}
      0 & -i z^{-\alpha} D^{2} 4^{-\alpha} \\ i z^{\alpha} D^{-2} 4^{\alpha} & 0
    \end{pmatrix} + \frac{a^2-a^{-2}}{4} \begin{pmatrix}
      z^{-\alpha} D^{2} &  0 \\ 0 & z^{\alpha} D^{-2}
    \end{pmatrix} \\
    & - \frac{a^2-a^{-2}}{4} \begin{pmatrix}
      z^{\alpha} D^{-2} & 0 \\ 0 & z^{-\alpha} D^{2}
    \end{pmatrix}  \biggr],
  \end{align*}
  where $a$ and $D$ denote the functions $a(z)$ and $D(z)$ in \eqref{fun-a(z)} and \eqref{fun-d(z)} for brevity. For such kind of $J_k^{(\beta)}(z)$ and $J_k^{(0)}(z)$, the pattern for $S_k(z)$ in \eqref{sk-pattern} is no longer valid.  Therefore, it is likely that one can only obtain the asymptotic expansion in powers of $1/N$ instead of $1/N^2$ in Theorem \ref{thm2}. This is also the reason why we focus on the case $\alpha=0$ in this paper.
\end{rmk}


\section{Asymptotic expansion of $\rho_N^{(1)} (z)$} \label{sec-rho&asy}

With the Deift-Zhou steepest descent analysis done in the previous section, we are ready to obtain the asymptotic expansion for $\rho_N^{(1)} (z)$. Reverting the transformations in \eqref{YtoU}, \eqref{UtoT} and \eqref{TtoS}, we have
\begin{equation} \label{Y-ST}
  \begin{aligned}
Y(z)& = e^{\frac{N l_V}{2}\sigma_3} U(z) e^{N(g(z)-\frac{l_V}{2})\sigma_3}\\
 & = e^{\frac{N l_V}{2}\sigma_3} T(z)
\left(\begin{matrix} 1 &\quad 0\\r(z)&\quad 1\end{matrix}\right) e^{N(g(z)-\frac{l_V}{2})\sigma_3}\\
& = e^{\frac{N l_V}{2}\sigma_3} S(z)T^{(A)}(z)\left(\begin{matrix} 1 &\quad
0\\r(z)&\quad 1\end{matrix}\right) e^{N(g(z)-\frac{l_V}{2})\sigma_3},
\end{aligned}
\end{equation}
where
$$
r(z)= \begin{cases}
  0, & \text{for $z$ outside the lens-shaped region in Fig. \ref{fig-gamma}} \\
  \pm e^{-2N\xi(z)}, & \text{for $z$ in the upper/lower lens region in Fig. \ref{fig-gamma}.}
\end{cases}
$$
Recalling \eqref{rho-pn} and \eqref{Y-sol}, we get
\begin{equation}
  \rho_N^{(1)} (z)=-\frac{e^{-N V_\textbf{t}(z)}}{2\pi i N}[Y_{11}'(z)Y_{21}(z)-Y_{11}(z)Y_{21}'(z)].
\end{equation}
For $z$ in the neighborhood of $\beta$, since $Y(z)$ involves the same Airy-type parametrix, we have the same formula as that in \cite[eq.(4.4)]{em2003}. For $z$ in the neighborhood of 0, as the Bessel-type paramatrix is constructed, the computations are different. First, we get the following expression.

\begin{lem} \label{lemma-rho}
  Let $\textbf{t}$ belong to the set $\mathbb{T}(\mathcal{T},\gamma)$ in Theorem \ref{thm-measure}. With $a(z)$ and $\tilde{f}(z)$ defined in \eqref{fun-a(z)} and \eqref{f-bessel-def}, we have
  \begin{equation} \label{rho-for-1}
    \begin{aligned}
    \rho_N^{(1)}(z) &=\frac{2}{N} \left(\frac{a'}{a}+\frac{\tilde{f}'}{4\tilde{f}}\right)\tilde{f}^{1/2}I_0(2\tilde{f}^{1/2}) I_0'(2\tilde{f}^{1/2}) - \frac{\tilde{f}'}{N}\left[I_0^2(2\tilde{f}^{1/2})-(I_0'(2\tilde{f}^{1/2}))^2\right] \\
    & \hspace{-25pt} -\frac{1}{4N\pi i} \left[(S'(z)B^{(0)}(z)\tilde\Psi(\tilde{f}))_{11}(S(z)B^{(0)}(z)\tilde\Psi(\tilde{f}))_{21}-(S(z)B^{(0)}(z)\tilde\Psi(\tilde{f}))_{11}
    (S'(z)B^{(0)}(z)\tilde\Psi(\tilde{f}))_{21}\right]
    \end{aligned}
  \end{equation}
  for $z$ in a fixed-size complex neighborhood of 0. Here $S(z)$ satisfies the RH problem in \eqref{S-rhp},
  \begin{equation}
    \tilde{\Psi}(z):=\left(\begin{matrix} I_0(2z^{1/2}) &\quad
    -\frac{i}{\pi}K_0(2z^{1/2})\\-2\pi i z ^{1/2} I_0'(2z^{1/2})&\quad
    -2z^{1/2}K_0'(2z^{1/2})\end{matrix}\right), \qquad \textrm{for } z \in \mathbb{C} \setminus (-\infty, 0],
  \end{equation}
  and
  \begin{equation} \label{fun-b0}
    B^{(0)}(z)=\left(\begin{matrix} a(z) & ia^{-1}(z) \\ ia(z) & a^{-1}(z) \end{matrix}\right)\tilde{f}(z)^{\frac{\sigma_3}{4}}(2\pi)^{\frac{\sigma_3}{2}}.
  \end{equation}
\end{lem}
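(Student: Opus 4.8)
The plan is to evaluate $\rho_N^{(1)}(z)=-\frac{e^{-NV_\textbf{t}(z)}}{2\pi i N}\bigl[Y_{11}'Y_{21}-Y_{11}Y_{21}'\bigr]$ near $0$ by feeding in the reverted formula \eqref{Y-ST}, in which $T^{(A)}=T^{(0)}$ throughout the disk $\tilde U_\delta$. I would carry out the computation in region $\mathrm{I}'$ of the Bessel parametrix, where the lens factor is absent ($r(z)=0$) and $\Psi^{(0)}(\tilde f)$ equals the matrix $\tilde\Psi(\tilde f)$ of the statement. Since $\alpha=0$, the factor $(-z)^{-\alpha\sigma_3/2}$ in \eqref{T-0-bessel} is trivial, and a one-line computation starting from \eqref{out-para} gives $T^{(\infty)}(z)\tfrac1{\sqrt2}\left(\begin{smallmatrix}1&i\\i&1\end{smallmatrix}\right)=\tfrac1{\sqrt2}\left(\begin{smallmatrix}a&ia^{-1}\\ia&a^{-1}\end{smallmatrix}\right)$, so that $E^{(0)}(z)=\tfrac{(-1)^N}{\sqrt2}B^{(0)}(z)$ with $B^{(0)}$ as in \eqref{fun-b0}. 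Because $I_0(2\tilde f^{1/2})$ and $\tilde f^{1/2}I_0'(2\tilde f^{1/2})$ are entire functions of $\tilde f$, the final identity---once established in region $\mathrm{I}'$---extends to the whole neighborhood of $0$ by analytic continuation.

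Next I would do the exponential bookkeeping, organized around the bilinear combination $Y_{11}'Y_{21}-Y_{11}Y_{21}'$. Setting $M:=S\,T^{(0)}\left(\begin{smallmatrix}1&0\\r&1\end{smallmatrix}\right)$ and peeling off the outer factors of \eqref{Y-ST}, the first column obeys $Y_{11}=e^{Ng}M_{11}$ and $Y_{21}=e^{N(g-l_V)}M_{21}$; hence in the combination the $Ng'$ contributions cancel and $Y_{11}'Y_{21}-Y_{11}Y_{21}'=e^{N(2g-l_V)}\bigl(M_{11}'M_{21}-M_{11}M_{21}'\bigr)$. The prefactor merges with $e^{-NV_\textbf{t}}$ into $e^{2N\xi}$ by \eqref{g and v}. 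In region $\mathrm{I}'$ one has $M_{11}=e^{-N\xi}P_{11}$ and $M_{21}=e^{-N\xi}P_{21}$ with $P:=S E^{(0)}\Psi^{(0)}(\tilde f)$, and the identical cancellation of the $N\xi'$ terms produces a factor $e^{-2N\xi}$ that annihilates $e^{2N\xi}$. Using $E^{(0)}=\tfrac{(-1)^N}{\sqrt2}B^{(0)}$ (whose scalar prefactor squares to $\tfrac12$), I arrive at
\[
 \rho_N^{(1)}(z)=-\frac{1}{4\pi i N}\Bigl[(SB^{(0)}\tilde\Psi)_{11}'(SB^{(0)}\tilde\Psi)_{21}-(SB^{(0)}\tilde\Psi)_{11}(SB^{(0)}\tilde\Psi)_{21}'\Bigr].
\]

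To split off the last term of \eqref{rho-for-1}, I would set $Q:=B^{(0)}\tilde\Psi(\tilde f)$ and apply the product rule $(SQ)'=S'Q+SQ'$. The piece in which $S$ is differentiated is precisely the last bracket of \eqref{rho-for-1}. In the remaining piece, expanding the $2\times2$ products shows that the coefficients of $S_{11}S_{21}$ and of $S_{12}S_{22}$ vanish identically, while the rest collapses to $(S_{11}S_{22}-S_{12}S_{21})\,(Q_{11}'Q_{21}-Q_{11}Q_{21}')=\det S\,(Q_{11}'Q_{21}-Q_{11}Q_{21}')$. Since the jumps in \eqref{S-rhp} have unit determinant and $S(\infty)=I$, Liouville's theorem gives $\det S\equiv1$, so $S$ drops out entirely---this is the structural reason why the first two terms of \eqref{rho-for-1} are free of $S$. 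Finally I would evaluate the Bessel Wronskian: \eqref{fun-b0} gives the first column $Q_{11}=(2\pi)^{1/2}\tilde f^{1/4}\bigl(aI_0+a^{-1}I_0'\bigr)$ and $Q_{21}=i(2\pi)^{1/2}\tilde f^{1/4}\bigl(aI_0-a^{-1}I_0'\bigr)$, where $I_0,I_0'$ are evaluated at $2\tilde f^{1/2}$; differentiating through $\frac{d}{dz}I_0(2\tilde f^{1/2})=\tilde f^{-1/2}\tilde f'\,I_0'(2\tilde f^{1/2})$ and eliminating $I_0''$ via the modified Bessel equation $I_0''(\zeta)=I_0(\zeta)-\zeta^{-1}I_0'(\zeta)$, then multiplying by $-\tfrac1{4\pi i N}$, reproduces the first two terms of \eqref{rho-for-1}.

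The main obstacle is not a single hard estimate but the disciplined tracking of the Wronskian through the three exponential conjugations, so that every $Ng'$, every $N\xi'$, and each matched pair of scalar exponentials cancels cleanly; the decisive structural observation is that $\det S\equiv1$ decouples the explicit Bessel contribution from the $S$-dependent correction term. The only purely computational step, the Bessel Wronskian at the end, is routine once the chain rule and the order-zero modified Bessel equation are invoked.
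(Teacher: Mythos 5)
Your proposal is correct and follows essentially the same route as the paper: revert the transformations $Y\to U\to T\to S$, use \eqref{g and v} to cancel the exponentials in the Wronskian-type combination, identify $E^{(0)}=\tfrac{(-1)^N}{\sqrt 2}B^{(0)}$, and reduce the explicit part to a Bessel Wronskian via the order-zero modified Bessel equation. The only difference is organizational --- you make explicit the $\det S\equiv 1$ argument that decouples the $S$-independent terms, a step the paper subsumes in ``combining the above formulas'' after computing the logarithmic derivatives of $E^{(0)}$ and $\tilde\Psi$.
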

\begin{proof}
  According to the parametrix in \eqref{T-0-bessel}, we have from \eqref{g and v} and \eqref{Y-ST}
  \begin{equation} \label{Y-SE}
    Y(z) =  e^{\frac{N l_V}{2}\sigma_3} S(z)E^{(0)}(z) \tilde{\Psi}(\tilde{f}(z))e^{\frac{N}{2} V_\textbf{t}(z) \sigma_3},
  \end{equation}
  where
  \begin{equation*}
    \begin{aligned}
    E^{(0)}(z)=&(-1)^N T^{(\infty)}(z)\frac{1}{\sqrt{2}} \left(\begin{matrix} 1 &\quad i\\i&\quad
    1\end{matrix}\right) \tilde{f}(z)^{\frac{\sigma_3}{4}}(2\pi)^{\frac{\sigma_3}{2}}\\
    =&\frac{(-1)^N}{\sqrt {2}} \left(\begin{matrix} a &\quad ia^{-1}\\ia&\quad a^{-1}\end{matrix}\right)\tilde{f}(z)^{\frac{\sigma_3}{4}}(2\pi)^{\frac{\sigma_3}{2}} =\frac{(-1)^N}{\sqrt {2}}B^{(0)}(z).
    \end{aligned}
  \end{equation*}
  As $Y_{11}'(z)$ and $Y_{21}'(z)$ appear in the formula for $\rho_N^{(1)} (z)$, we need to calculate the derivative for \eqref{Y-SE}. From the formula for $E^{(0)}(z)$ in the above formula, we have
  \begin{eqnarray}
    \frac{d}{dz} E^{(0)}(z)&=& \frac{(-1)^N}{\sqrt {2}}  \left[\left(\begin{matrix} a' &  -ia^{-2}a'\\ia'&
    -a^{-2}a'\end{matrix}\right) \tilde{f}^{\frac{\sigma_3}{4}} + \left(\begin{matrix} a &
    ia^{-1}\\ia&  a^{-1}\end{matrix}\right) \left(\begin{matrix} \frac{1}{4}\tilde{f}'\tilde{f}^{-\frac{3}{4}} &  0\\0&
    -\frac{1}{4}\tilde{f}'\tilde{f}^{-\frac{5}{4}}\end{matrix}\right) \right] (2\pi)^{\frac{\sigma_3}{2}} \nonumber \\
    &=& \left[\frac{a'}{a}+\frac{\tilde{f}'}{4\tilde{f}}\right]E^{(0)}(z)\sigma_3.
  \end{eqnarray}
  To calculate $\frac{d}{d z}\tilde{\Psi}(z)$, we recall that both $I_\alpha (z)$ and $K_\alpha(z)$ satisfy the modified Bessel equation
    $$
    z^2 u''(z) + z u'(z)-(z^2+\alpha^2)u(z)=0.
    $$
  Then we get
  \begin{equation} \label{Psi-dev}
    \frac{d}{dz}\tilde{\Psi}(z)= \left(\begin{matrix}
    z^{-1/2}I_0'(2z^{1/2}) &\quad
    -\frac{i}{\pi}z^{-1/2}K_0'(2z^{1/2})\\-2\pi i I_0(2z^{1/2})&\quad -2
    K_0(2z^{1/2})\end{matrix}\right)= \left(\begin{matrix}  0&\quad -\frac{1}{2\pi iz}\\-2\pi i &\quad
    0\end{matrix}\right)\tilde{\Psi}(z).
  \end{equation}
  Combining the above formulas \eqref{Y-SE}-\eqref{Psi-dev}, we obtain \eqref{rho-for-1}.
\end{proof}

To derive the full asymptotic expansion for $\rho_N^{(1)}(z)$, some more detailed computations about the last term in \eqref{rho-for-1} are needed. Based on the pattern for the asymptotic expansion of $S(z)$ in Lemma \ref{lemm-s-pattern}, we get the following result.

\begin{lem} \label{lemma-rho-exp}
    We have
    \begin{equation} \label{rho-for-2}
      \begin{aligned}
    &-\frac{1}{4N\pi i}\left[(S'B^{(0)}\tilde{\Psi}(\tilde{f}))_{11}(SB^{(0)}\tilde{\Psi}(\tilde{f}))_{21}
    -(SB^{(0)}\tilde{\Psi}(\tilde{f}))_{11}(S'B^{(0)}\tilde{\Psi}(\tilde{f}))_{21}\right]\\
    & \qquad \sim \sum_{j \textrm{ even, } j \ge 2}  N^{-j}\tilde{a}_j(z)a^2 \tilde{f}^{1/2}I_0^2(2\tilde f^{1/2}) +\sum_{j \textrm{ even, } j\ge 2} N^{-j}\tilde{b}_j(z)  \frac{\tilde f^{1/2} I_0'(2\tilde f^{1/2})^2}{a^2 } \\
    & \qquad \qquad +\sum_{j \textrm{ odd, } j \ge 3}  N^{-j}\tilde{c}_j(z) \tilde f^{1/2}I_0(2\tilde f^{1/2})I_0'(2\tilde f^{1/2}), \qquad \textrm{as } N \to \infty,
    \end{aligned}
    \end{equation}
    where $\tilde{a}_j(z)$, $\tilde{b}_j(z)$ and $\tilde{c}_j(z)$ are analytic functions in both $z$ and $\textbf{t}$, for $z$ in a fixed-size neighborhood of 0 and $\textbf{t}$ belong to the set $\mathbb{T}(\mathcal{T},\gamma)$ in Theorem \ref{thm-measure}.
\end{lem}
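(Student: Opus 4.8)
The plan is to reduce the bracketed quantity to a Wronskian-type bilinear form in the columns of $S$ and $S'$, and then to read off its $N$-dependence from a parity (bigrading) argument built on Lemma \ref{lemm-s-pattern}.

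First I would substitute the explicit matrices $B^{(0)}(z)$ from \eqref{fun-b0} and $\tilde\Psi(\tilde f)$ into the bracket. Writing $v=(P_{11},P_{21})^T$ for the first column of $B^{(0)}\tilde\Psi(\tilde f)$, a direct computation gives
\begin{equation*}
 P_{11}=(2\pi)^{1/2}\tilde f^{1/4}\left[a\,I_0(2\tilde f^{1/2})+a^{-1}I_0'(2\tilde f^{1/2})\right],\qquad P_{21}=i(2\pi)^{1/2}\tilde f^{1/4}\left[a\,I_0(2\tilde f^{1/2})-a^{-1}I_0'(2\tilde f^{1/2})\right].
\end{equation*}
Since only the first columns $Sv$ and $S'v$ enter, the bracket equals the determinant $\det[\,S'v\mid Sv\,]$, which expands as $\Lambda_1P_{11}^2+\Lambda_2P_{11}P_{21}+\Lambda_3P_{21}^2$ with $\Lambda_1=S_{11}'S_{21}-S_{21}'S_{11}$, $\Lambda_3=S_{12}'S_{22}-S_{22}'S_{12}$ and $\Lambda_2=S_{11}'S_{22}+S_{12}'S_{21}-S_{22}'S_{11}-S_{21}'S_{12}$. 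Squaring $P_{11}$ and $P_{21}$ shows that $P_{11}^2$, $P_{21}^2$ and $P_{11}P_{21}$ are, up to the common factor $2\pi\tilde f^{1/2}$, linear combinations of exactly the three functions $a^2I_0^2$, $a^{-2}(I_0')^2$ and $I_0I_0'$ appearing on the right of \eqref{rho-for-2}. Collecting terms, the coefficients of $a^2I_0^2\tilde f^{1/2}$ and $a^{-2}(I_0')^2\tilde f^{1/2}$ come out proportional to $N^{-1}(\Lambda_1-\Lambda_3\pm i\Lambda_2)$, while that of $I_0I_0'\tilde f^{1/2}$ is proportional to $N^{-1}(\Lambda_1+\Lambda_3)$.

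It then remains to determine the $N$-parities and leading orders of $\Lambda_1\pm\Lambda_3$ and $\Lambda_2$. Here I would exploit a bigrading: assign to each monomial its $N^{-1}$-power parity together with its matrix parity, where $\{I,\sigma_2\}$ is declared even and $\{\sigma_1,\sigma_3\}$ odd. The Pauli relations make this matrix parity multiplicative (even$\times$even and odd$\times$odd give even, even$\times$odd gives odd), so the \emph{balanced} elements, those whose $N$-parity equals their matrix parity in every term, form a subalgebra. By Lemma \ref{lemm-s-pattern}, $S\sim I+\sum_kN^{-k}S_k$ is balanced, and since differentiation in $z$ leaves the constant Pauli matrices untouched, $S'\sim\sum_kN^{-k}S_k'$ is balanced as well. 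Decomposing each entry of $S$ and $S'$ into its even and odd $N$-parts and using that for a balanced $X$ the combinations $X_{11}\pm X_{22}$ and $X_{12}\pm X_{21}$ each have a definite parity, a short bookkeeping of the even ($E\cdot E$, $O\cdot O$) versus odd ($E\cdot O$) products shows that $\Lambda_1+\Lambda_3$ is purely even in $N^{-1}$, whereas $\Lambda_1-\Lambda_3$ and $\Lambda_2$ are purely odd.

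Finally I would pin down the leading orders. Since $S=I+O(N^{-1})$, the entries of $S'$ are $O(N^{-1})$, and because $S_1$ is odd-graded the even $N$-parts of $S'$ (and of the off-diagonal of $S$) only begin at $N^{-2}$. Feeding these orders into the three expressions yields $\Lambda_1+\Lambda_3=O(N^{-2})$ in even powers and $\Lambda_1-\Lambda_3,\ \Lambda_2=O(N^{-1})$ in odd powers. Multiplying by the prefactor $N^{-1}$ then places the $a^2I_0^2$ and $a^{-2}(I_0')^2$ terms at even $j\ge2$ and the $I_0I_0'$ term at odd $j\ge3$, exactly as claimed; analyticity of $\tilde a_j,\tilde b_j,\tilde c_j$ in $z$ near $0$ and in $\textbf{t}$ on $\mathbb{T}(\mathcal{T},\gamma)$ is inherited from that of $a(z)$, $\tilde f(z)$ and the coefficients $S_k(z)$, which depend analytically on $\textbf{t}$ through $\beta$ and $h$. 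The main obstacle is the parity bookkeeping of the third paragraph: setting up the bigrading correctly and verifying the cancellations that force $\Lambda_1-\Lambda_3$ and $\Lambda_2$ to carry only odd powers is the crux, whereas the substitution of the Bessel blocks and the order counting are routine.
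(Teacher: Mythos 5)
Your proof is correct and follows essentially the same route as the paper's: both reduce the bracket to quadratic combinations of the first column of $B^{(0)}\tilde\Psi(\tilde f)$ (your $P_{11},P_{21}$ are the paper's $m_{11},m_{21}$, with $m_{11}^2\pm m_{21}^2$ and $m_{11}m_{21}$ spanning exactly the three Bessel combinations on the right of \eqref{rho-for-2}), and both sort the resulting $N^{-1}$-powers by the parity pattern of Lemma \ref{lemm-s-pattern}. Your packaging via the coefficients $\Lambda_1,\Lambda_2,\Lambda_3$ and the balanced $\mathbb{Z}_2$-bigrading is a tidier bookkeeping of the same double-sum expansion the paper writes out explicitly, and your order counts ($\Lambda_1+\Lambda_3=O(N^{-2})$ even, $\Lambda_1-\Lambda_3,\Lambda_2=O(N^{-1})$ odd) reproduce the stated starting indices $j\ge2$ and $j\ge3$.
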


\begin{proof} Recalling Lemma \ref{lemm-s-pattern}, we have
\begin{equation*}
S(z) \sim I+\sum_{k \textrm{ odd, } k\ge
1}(s_k^{(1)}\sigma_3+s_k^{(2)}\sigma_1)N^{-k}+\sum_{k \textrm{ even, }
\hbox{ }k\ge 2}(s_k^{(1)}I+s_k^{(2)}\sigma_2)N^{-k}, \quad \textrm{as } N \to \infty
\end{equation*}
which gives us
\begin{equation*}
S'(z) \sim \sum_{k \textrm{ odd, }k \ge
1}(s_k^{(1)'}\sigma_3+s_k^{(2)'}\sigma_1)N^{-k} +\sum_{k \textrm{ even, } k \ge 2}(s_k^{(1)'}I+s_k^{(2)'}\sigma_2)N^{-k}, \quad \textrm{as } N \to \infty.
\end{equation*}
Consider the first column of
\begin{equation}
B^{(0)}(z) \tilde\Psi(\tilde{f}) =\left(\begin{matrix} m_{11} &\quad *
\\m_{21}&\quad *\end{matrix}\right),
\end{equation}
where $B^{(0)}(z)$ is given in \eqref{fun-b0}. It is easily verified that
\begin{equation} \label{m&m}
  \begin{aligned}
    &m_{11}^2+m_{21}^2= 8 \pi \tilde f^{1/2}I_0(2\tilde f^{1/2})I_0'(2\tilde f^{1/2}), \\
    &m_{11}^2-m_{21}^2= 4 \pi \tilde{f}^{1/2} \left( a^2 I_0^2(2\tilde f^{1/2}) - \frac{I_0'(2\tilde f^{1/2})^2}{a^2 } \right), \\
    &m_{11} m_{21} = 2 \pi i \tilde{f}^{1/2} \left( a^2 I_0^2(2\tilde f^{1/2}) - \frac{I_0'(2\tilde f^{1/2})^2}{a^2 } \right).
  \end{aligned}
\end{equation}
From the above formulas, we have
\begin{equation}
\begin{aligned}&(S'B^{(0)}\tilde{\Psi})_{11}(SB^{(0)}\tilde{\Psi})_{21} \sim \left[\sum_k \frac{s_{2k-1}'m_{11}+t_{2k-1}'m_{21}}{N^{2k-1}}+\sum_k \frac{s_{2k}'m_{11} - i \, t_{2k}'m_{21}}{N^{2k}}\right]\\
& \qquad \qquad \times \left[m_{21}+\sum_k \frac{t_{2k-1}m_{11}-s_{2k-1}m_{21}}{N^{2k-1}}+\sum_k \frac{i\,t_{2k}m_{11}+s_{2k}m_{21}}{N^{2k}}\right]\end{aligned}
\end{equation}
and
\begin{eqnarray}
  &&(SB^{(0)}\tilde{\Psi})_{11}(S'B^{(0)}\tilde{\Psi})_{21} \sim \left[m_{11}+\sum_k \frac{s_{2k-1}m_{11}+t_{2k-1}m_{21}}{N^{2k-1}} + \sum_k \frac{s_{2k}m_{11} - i\,t_{2k}m_{21}}{N^{2k}}\right] \nonumber \\
&& \qquad \qquad \times \left[\sum_k \frac{t_{2k-1}'m_{11}-s_{2k-1}'m_{21}}{N^{2k-1}}+\sum_k \frac{i \, t_{2k}'m_{11}+s_{2k}'m_{21}}{N^{2k}}\right].
\end{eqnarray}
Substracting the above two formulas, one can see that all odd terms of $1/N$ are given in terms
of $m_{11}^2-m_{21}^2$ and $m_{11}m_{21}$. On the other hand, all even terms of $1/N$ are given in terms of $m_{11}^2+m_{21}^2$. Recalling the properties of $m_{11}$ and $m_{12}$ in \eqref{m&m}, one gets \eqref{rho-for-2}. And analyticity of the coefficients $\tilde{a}_j(z)$, $\tilde{b}_j(z)$ and $\tilde{c}_j(z)$ follows from the analytic property of $S(z)$.
\end{proof}


\section{Proof of the main theorem} \label{sec-mainthm}

Now we are ready to prove Theorem \ref{thm2} based on
the representations of the one-point correlation function $\rho_N^{(1)}(z)$
obtained in the previous section. We would like to carry on our asymptotic
analysis in the neighborhood of 0 and $\beta$ separately. To achieve
this object, let us introduce a partition $\{\chi_0,\chi_\beta\}$ of
unity for $(0,\infty)$ as follows
\begin{align*}
&\chi_0(z) \textrm{ is } C^\infty \quad \textrm{with} \quad 0 \leq \chi_0(z) \leq 1\\
&\overline{\textrm{supp } \chi_0} \subset (0, z^*+\varepsilon), \quad \chi_0 \equiv 1, \quad \textrm{for} \quad z \in [0, z^*-\varepsilon) \\
& \textrm{and } \chi_\beta(z)=1-\chi_0(z),
\end{align*}
where $z^*$ is a fixed point in  $(0, \beta)$.
Note that $\rho_N^{(1)}(z)$ is exponentially small for large $N$ when $z$ is bounded away from $[0,\beta]$. Then, instead of considering $\D\int_0^\infty \Theta(\lambda)\rho_N^{(1)}(\lambda)d\lambda$, it is enough to study $\D\int_0^{\beta+\delta} \Theta(\lambda)\rho_N^{(1)}(\lambda)d\lambda$ for some small $N$-independent $\delta$. Recall that $\Theta(\lambda)$ is a $C^\infty$-smooth function and grows no faster than a polynomial for $\lambda \to \infty$. According to the partition of unity above, we rewrite $\rho_N^{(1)}(z)$ as
\begin{equation}
\rho_N^{(1)}(z)=\chi_0(z)\rho_N^{(1,0)}(z)+\chi_\beta(z)\rho_N^{(1,\beta)}(z),
\end{equation}
and obtain
\begin{equation}\label{rho-exp-full}
\int_0^{\beta+\delta} \Theta(z)\rho_N^{(1)}(z)dz=\int_0^{z^*+\varepsilon} \chi_0(z)\Theta(z)\rho_N^{(1,0)}(z)dz+\int_{z^*-\varepsilon}^{\beta+\delta}\chi_\beta(z) \Theta(z)\rho_N^{(1,\beta)}(z)dz.
\end{equation}
Because $\rho_N^{(1,\beta)}(z)$ satisfies the same Airy-type representation as that in \cite[eq.(4.4)]{em2003}, the second term on the right-hand side of the above formula has an asymptotic expansion in powers of $1/N^2$ according to the results in \cite{em2003}.  Then it is sufficient for us to consider only the first term
\begin{equation}\label{rho-exp-1}
\int_0^{z^*+\varepsilon} \chi_0(z)\Theta(z)\rho_N^{(1,0)}(z) d z.
\end{equation}
According to the asymptotic expansion of $\rho_N^{(1)}$ in Lemma \ref{lemma-rho} and \ref{lemma-rho-exp}, there are four types of integrals in \eqref{rho-exp-1} as follows
\begin{itemize}
\item[(i)]  $\frac{1}{N}\int_0^{z^*+\varepsilon} \theta(z) \left[I_0^2(2\tilde f^{1/2})-(I_0'(2\tilde f^{1/2}))^2\right] \tilde f' dz$;

\item[(ii)]  ${N}^{-j+1}\int_0^{z^*+\varepsilon}\theta(z) I_0(2\tilde f^{1/2})I_0'(2\tilde f^{1/2}) dz\quad(j \hbox{ } odd)$;

\item[(iii)]  ${N}^{-j+1}\int_0^{z^*+\varepsilon}\theta(z) I_0^2(2\tilde f^{1/2}) dz\quad(j \hbox{ } even)$;

\item[(iv)]  ${N}^{-j+1}\int_0^{z^*+\varepsilon}\theta(z) (I_0'(2\tilde f^{1/2}))^2 dz\quad(j \hbox{ } even)$;
\end{itemize}
where $\theta(z)$ is a general infinitely differentiable function of $z$ and is compactly supported within $(0, z^*+\varepsilon)$.

Next, we will show that all integrals of these four types
possess asymptotic expansions in powers of $1/N^2$. Before going to the proof of their expansions, we would like to mention the following relations among these four types integrals.

\begin{prop} \label{prop-four-int}
If integrals of the types (i) have an
asymptotic expansion in powers of $1/N^2$, then integrals of all other three types have asymptotic expansions in powers of $1/N^2$.
\end{prop}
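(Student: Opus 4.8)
The plan is to prove all three reductions from a single mechanism: the first–order recurrences for the modified Bessel function $I_0$ together with integration by parts. The guiding principle is that one integration by parts trades any one of the three products $I_0^2,\ (I_0')^2,\ I_0I_0'$ for another and produces exactly one power of $1/N$, while the \emph{only} combination that is never produced this way is $I_0^2-(I_0')^2$, which is precisely the integrand of type (i). Throughout I abbreviate $I_0=I_0(2\tilde{f}^{1/2})$, $I_0'=I_0'(2\tilde{f}^{1/2})$, and I factor the $N$-dependence of the argument by writing $\tilde{f}=N^2F$ with $F$ independent of $N$, so that the Bessel argument is $2NF^{1/2}$ and, with $G:=F'/F^{1/2}$, one has $\tfrac{d}{dz}(2\tilde{f}^{1/2})=NG$. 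On the range of integration the argument $2\tilde{f}^{1/2}$ is purely imaginary, so the products above are bounded oscillatory functions, which will make the remainder estimates below legitimate.

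First I would record the differential identities obtained from $I_0'=I_1$, $I_1'=I_0-y^{-1}I_1$ and the Bessel equation ($y=2\tilde{f}^{1/2}$):
\begin{align*}
 \frac{d}{dz}\,I_0^2 &= 2NG\,I_0I_0',\\
 \frac{d}{dz}\,(I_0')^2 &= 2NG\,I_0I_0'-\frac{F'}{F}\,(I_0')^2,\\
 \frac{d}{dz}\,\bigl(I_0I_0'\bigr) &= NG\,\bigl(I_0^2+(I_0')^2\bigr)-\frac{F'}{2F}\,I_0I_0'.
\end{align*}
Writing $P[\theta]=\int_0^{z^*+\varepsilon}\theta\,I_0^2\,dz$, $Q[\theta]=\int_0^{z^*+\varepsilon}\theta\,(I_0')^2\,dz$ and $R[\theta]=\int_0^{z^*+\varepsilon}\theta\,I_0I_0'\,dz$, the first identity gives, after integration by parts with vanishing boundary terms, $R[\theta]=-\tfrac{1}{2N}P[(\theta/G)']$; the third identity gives $(P+Q)[\theta]=\tfrac{1}{N}R[\mu]$ with $\mu=\tfrac{\theta F'}{2GF}-(\theta/G)'$, and composing the two yields the crucial diagonal relation $(P+Q)[\theta]=-\tfrac{1}{2N^2}P[(\mu/G)']$. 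Finally I would observe that type (i) is, up to the explicit factor $\tilde{f}'=N^2F'$, exactly the combination $P-Q$: indeed type (i) $=N\,(P-Q)[F'\theta]$, so dividing by the fixed analytic, nonvanishing factor $F'$ shows that $(P-Q)[\,\cdot\,]$ and type (i) carry the same expansion.

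These relations assemble into a triangular system. From $P=\tfrac12\bigl((P+Q)+(P-Q)\bigr)$ I get
$P[\theta]=\tfrac12(P-Q)[\theta]-\tfrac{1}{4N^2}P[(\mu/G)']$,
which expresses $P[\theta]$ as a type-(i) integral plus an $O(N^{-2})$-smaller integral of the same kind (with a new weight of the same class). By hypothesis $(P-Q)=$ type (i) has an expansion in powers of $1/N^2$; since $P[\,\cdot\,]=O(N^{-1})$ (the mean value of $I_0(2\tilde{f}^{1/2})^2$ being $O(1/N)$), an induction on the order of the expansion then determines every coefficient of $P[\theta]$ and bounds the remainder after $K$ steps by $O(N^{-(2K+1)})$. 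This produces an asymptotic expansion for $P[\theta]$; then $Q=(P+Q)-P$ inherits it and $R=-\tfrac{1}{2N}P[(\theta/G)']$ follows. The explicit prefactors $N^{-j+1}$ ($j$ even for (iii),(iv), $j$ odd for (ii)) are arranged precisely so that, combined with the powers of $1/N$ generated by the integrations by parts, each type collapses into an overall series in $N^{-2}$, which is the claim.

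The delicate points, and where I expect the real work to lie, are threefold. (a) The endpoint $z=0$, where $G=F'/F^{1/2}$ is singular like $z^{-1/2}$ and the Bessel argument tends to $0$; I would dispose of this by the substitution $\zeta=F^{1/2}(z)$, under which $z$ is an even analytic function of $\zeta$, the argument becomes $2N\zeta$, every weight becomes a smooth function of $\zeta$ vanishing at $\zeta=0$, and all boundary terms (also those at $z^*+\varepsilon$, by compact support of $\theta$) manifestly vanish while the weight class is preserved under the reductions. (b) Justifying that the self-improving $N^{-2}$ recursion yields a genuine rather than merely formal expansion, which is the remainder estimate just described. (c) The structural fact that the only combination \emph{not} produced by the integrations by parts is $I_0^2-(I_0')^2=$ type (i): this is exactly what forces the diagonal gap to be $N^{-2}$ instead of $N^{-1}$, and it is inherited from the $\alpha=0$ symmetry of Lemma \ref{lemm-s-pattern}. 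I expect (a) and (b) to be routine once the substitution is in place; the conceptual crux is (c), namely recognizing that the single missing product is precisely the one whose expansion is assumed.
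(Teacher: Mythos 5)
Your proposal is correct in substance and rests on the same engine as the paper's proof: the derivative identities for $I_0(2\tilde f^{1/2})$ coming from the modified Bessel equation, integration by parts with vanishing boundary terms, and the observation that each integration by parts costs exactly one power of $1/N$ so that the prefactors $N^{-j+1}$ conspire to give even powers only. The organization, however, is genuinely different. The paper reduces each type to type (i) in at most two short steps: it uses that $(I_0'(2\tilde f^{1/2}))^2\,\tilde f^{-1}\tilde f'$ is the exact $z$-derivative of $I_0^2(2\tilde f^{1/2})-(I_0'(2\tilde f^{1/2}))^2$, so a single integration by parts sends type (iv) to type (i) with the weight $-\bigl(\theta\tilde f/\tilde f'\bigr)'/\tau'$; type (iii) is then the algebraic split $I_0^2=[I_0^2-(I_0')^2]+(I_0')^2$; and type (ii) goes to type (iii) via $2I_0I_0'\tilde f^{-1/2}\tilde f'=\frac{d}{dz}I_0^2$. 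You instead close a linear system among $P,Q,R$ and extract $P$ from the self-referential recursion $P[\theta]=\tfrac12(P-Q)[\theta]-\tfrac{1}{4N^2}P[(\mu/G)']$, which then has to be iterated with a remainder estimate to produce a genuine expansion. Both routes are valid; the paper's is more economical because it exploits the exact antiderivative of $(I_0')^2/w$ and therefore never needs to invert anything, while yours makes the structural point (your item (c)) more transparent — that $I_0^2-(I_0')^2$ is the one combination the integrations by parts cannot regenerate, which is exactly why the hypothesis is placed on type (i). Two small cautions: your relation for type (ii) feeds $P$ the weight $(\theta/G)'$, which behaves like $z^{-1/2}$ at the hard edge when $\theta(0)\neq0$, so the weight class genuinely must be enlarged to smooth functions of $\zeta=F^{1/2}(z)$ as you anticipate in (a) (the paper's own step (ii)$\to$(iii) produces the same singularity via $\bigl[\theta\tau^{1/2}/\tau'\bigr]'$, so this is not a defect peculiar to your argument, but it does mean the type-(i) hypothesis must be read for that larger class, which is what Lemma \ref{main-lemma} actually delivers); and the assertion $P[\cdot]=O(N^{-1})$ is true but not needed — $P[\cdot]=O(1)$ already suffices to terminate the iteration, and the correct leading order falls out of the recursion once the hypothesis is inserted.
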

\begin{proof}
  First, for an even integer $j$, we have the following relations between type (i) and (iv) from integration by parts
\begin{eqnarray*}
&&{N}^{-j+1}\int_0^{z^*+\varepsilon}\theta(z) (I_0'(2\tilde f^{1/2}))^2 dz ={N}^{-j+1}\int_0^{z^*+\varepsilon}\left(\frac{\theta(z)\tilde f}{\tilde f'}\right) (I_0'(2\tilde f^{1/2}))^2 \tilde f^{-1}\tilde f'dz\notag\\
&& \hspace{2.2cm} =-{N}^{-j+1}\int_0^{z^*+\varepsilon}\left(\frac{\theta(z)\tilde f}{\tilde f'}\right)' \left[I_0^2(2\tilde f^{1/2})-(I_0'(2\tilde f^{1/2}))^2\right]dz\notag\\
&&\hspace{2.2cm} =-{N}^{-j-1}\int_0^{z^*+\varepsilon}\left(\frac{\theta(z)\tau}{\tau'}\right)'\frac{1}{\tau'}
\left[I_0^2(2\tilde f^{1/2})-(I_0'(2\tilde f^{1/2}))^2\right]\tilde f'dz.
\end{eqnarray*}
Note that, in the integration by parts, the boundary terms don't
appear because $\theta(z)$ is compactly supported within $(0,
z^*+\varepsilon)$. Moreover, $\tilde f'(z)$ is analytic and
nonvanishing  throughout the region of integration so that the
integral is well defined. Lastly,
$$
\tau'(z):=N^{-2} \tilde f'(z)
$$
defines an analytic nonvanishing function which is
independent of $N$; see the definition of $\tilde f(z)$ in \eqref{f-bessel-def}. As the quantity
$\left({\theta(z)\tau}/{\tau'}\right)'({1}/{\tau'})$ is
infinitely differentiable on $[0, z^*+\varepsilon]$, this suggests
type (iv) integral can be reduced to type (i).

Similarly, for the type (iii), we have
\begin{align*}
&{N}^{-j+1}\int_0^{z^*+\varepsilon}\theta(z) I_0^2(2\tilde f^{1/2}) dz \notag\\
=&{N}^{-j+1}\int_0^{z^*+\varepsilon}\frac{\theta(z)}{\tilde f'} \left[I_0^2(2\tilde f^{1/2})-(I_0'(2\tilde f^{1/2}))^2\right]\tilde f'dz +{N}^{-j+1}\int_0^{z^*+\varepsilon}\theta(z)(I_0'(2\tilde f^{1/2}))^2 dz\notag\\
=&{N}^{-j-1}\int_0^{z^*+\varepsilon}\frac{\theta(z)}{\tau'}
\left[I_0^2(2\tilde f^{1/2})-(I_0'(2\tilde f^{1/2}))^2\right]\tilde
f'dz +{N}^{-j+1}\int_0^{z^*+\varepsilon}\theta(z)(I_0'(2\tilde
f^{1/2}))^2 dz.
\end{align*}
The two terms in the last equation are integrals of the type (i) and
(iv), respectively.

Finally, suppose that $j$ is an odd integer, then, for the type
(ii), we have
\begin{align*}\label{e58}
&2{N}^{-j+1}\int_0^{z^*+\varepsilon}\theta(z) I_0(2\tilde f^{1/2})I_0'(2\tilde f^{1/2}) dz =2{N}^{-j+1}\int_0^{z^*+\varepsilon} \frac{\theta(z)\tilde f^{1/2}}{\tilde f'}I_0(2\tilde f^{1/2})I_0'(2\tilde f^{1/2})\tilde f^{-1/2}\tilde f' dz\notag \\
&\qquad =-{N}^{-j+1}\int_0^{z^*+\varepsilon} \left[\frac{\theta(z)\tilde f^{1/2}}{\tilde f'}\right]'I_0^2(2\tilde f^{1/2}) dz= -{N}^{-j}\int_0^{z^*+\varepsilon}
\left[\frac{\theta(z)\tau^{1/2}}{\tau'}\right]'I_0^2(2\tilde f^{1/2}) dz,
\end{align*}
where the last integral is of type (iii). Thus, there four types of integrals are equivalent and our results follow.
\end{proof}

Now we focus on the type (i) integral and derive its asymptotic expansion. We have
\begin{lem} \label{main-lemma}
  The type (i) integral
  \begin{equation}\label{e510}
\frac{1}{N}\int_0^{z^*+\varepsilon}\theta(z) \left[I_0^2(2\tilde
f^{1/2})-(I_0'(2\tilde f^{1/2}))^2\right] \tilde f' dz
\end{equation}
has an asymptotic expansion in powers of $1/N^2$.
\end{lem}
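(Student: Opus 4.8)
The plan is to exploit the fact that on the support of $\theta$ the argument $2\tilde f^{1/2}$ is purely imaginary and of size $O(N)$, so that the modified--Bessel combination in \eqref{e510} reduces to ordinary Bessel functions of a large real argument, whose large--argument expansion splits cleanly into an even--power ``smooth'' part and a rapidly oscillating part. First I would record the reduction. Since $\tilde f(z)<0$ for $z\in(0,\beta)$ (the integral defining $\tilde f$ in \eqref{f-bessel-def} is imaginary there), write $\tilde f(z)=N^{2}\tau(z)$ with $\tau(z)$ the $N$--independent, real--analytic function appearing in the proof of Proposition \ref{prop-four-int} (so $\tau<0$, $\tau'\neq0$ on $(0,\beta)$). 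Then, with $s:=2|\tilde f|^{1/2}=2N|\tau|^{1/2}$, one has $I_0(2\tilde f^{1/2})=J_0(s)$ and $I_0'(2\tilde f^{1/2})=iJ_1(s)$, whence
$$ I_0^2(2\tilde f^{1/2})-\bigl(I_0'(2\tilde f^{1/2})\bigr)^2=J_0^2(s)+J_1^2(s)=:G(s), $$
and the integral \eqref{e510} becomes $N\int_0^{z^*+\varepsilon}\theta(z)\,\tau'(z)\,G\bigl(2N|\tau(z)|^{1/2}\bigr)\,dz$. Because $\theta$ is compactly supported inside the \emph{open} interval $(0,z^*+\varepsilon)$, we have $|\tau|\ge c_0>0$ on $\mathrm{supp}\,\theta$, so $s\ge c_1N$ is uniformly large there and $\tau'$ is nonvanishing.

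Next I would insert the classical large--argument expansions of $J_0$ and $J_1$ (see \cite{dlmf}). Squaring and adding, the $\cos^2,\sin^2,\cos\sin$ terms in the phase $\omega=s-\tfrac\pi4$ recombine so that
$$ G(s)\sim\frac{2}{\pi s}\sum_{k\ge0}\frac{g_k}{s^{2k}}\;+\;\cos(2s)\sum_{j\ge2}\frac{p_j}{s^{j}}\;+\;\sin(2s)\sum_{j\ge2}\frac{q_j}{s^{j}}, $$
i.e.\ a non--oscillatory part carrying \emph{only even powers} of $1/s$, plus oscillatory parts with phase $2s$. Substituting $s=2N|\tau|^{1/2}$ into the non--oscillatory part and integrating term by term yields
$$ \frac1\pi\sum_{k\ge0}\frac{g_k}{(2N)^{2k}}\int_0^{z^*+\varepsilon}\frac{\theta(z)\,\tau'(z)}{|\tau(z)|^{\,k+1/2}}\,dz, $$
which is manifestly an expansion in powers of $1/N^{2}$ with $N$--independent coefficients; each integral converges because $\theta$ vanishes identically near $z=0$, annihilating the $|\tau|^{-k-1/2}$ singularity there. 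This step is made rigorous in the usual way by truncating the asymptotic series for $G$ and bounding the remainder uniformly on $\mathrm{supp}\,\theta$.

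The hard part will be controlling the oscillatory contribution, which I expect to be the crux. After the substitution it is a sum of oscillatory integrals with common phase $\Phi(z)=4N|\tau(z)|^{1/2}$ and smooth, compactly supported amplitudes. On $\mathrm{supp}\,\theta$ the phase is non--stationary: $\Phi'(z)=-2N\,\tau'(z)/|\tau(z)|^{1/2}$ is bounded below in modulus by a constant times $N$, since $\tau'\neq0$ and $|\tau|\ge c_0>0$ there. Repeated integration by parts---each step gaining a factor $O(1/N)$ from $\Phi'$ and producing no boundary terms because $\theta$ is compactly supported in the open interval---then shows these integrals are $O(N^{-M})$ for every $M$, hence lie beyond all orders and do not affect the asymptotic expansion. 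It is exactly here that the hypothesis $\mathrm{supp}\,\theta\subset\subset(0,z^*+\varepsilon)$ is indispensable: it keeps the integration away from $z=0$, where $|\tau|^{1/2}\sim\sqrt z$ is non--smooth and $\Phi'$ degenerates. Combining the even--power smooth part with the negligible oscillatory part gives the asserted expansion of \eqref{e510} in powers of $1/N^{2}$.
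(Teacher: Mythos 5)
Your reduction of $I_0^2(2\tilde f^{1/2})-(I_0'(2\tilde f^{1/2}))^2$ to $J_0^2+J_1^2$ of a real argument, and your observation that the non-oscillatory part of its large-argument expansion contains only even powers, are both correct and are exactly the structural facts the paper's proof rests on (its $f_0^{(0)}=w_1^2+w_2^2+w_3^2+w_4^2$ is a series in $\zeta^{-3}$, i.e.\ in even powers of the Bessel argument $\zeta^{3/2}$). The gap is the step where you declare $|\tau|\ge c_0>0$ on $\mathrm{supp}\,\theta$ and treat $s=2N|\tau|^{1/2}$ as uniformly large. The cutoff $\theta$ arising here is $\chi_0\Theta$ multiplied by coefficient functions that are regular but nonvanishing at the origin, and $\chi_0\equiv 1$ on $[0,z^*-\varepsilon)$; the compact-support condition is only there to remove the right endpoint. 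So in general $\theta(0)\neq 0$ --- indeed the leading coefficient the lemma must produce is proportional to $\int\theta(z)\sqrt{(\beta-z)/z}\,h(z)\,dz$, with the hard-edge singularity at $z=0$ intact, and this contribution would be absent if $\theta$ vanished near $0$. But near the origin $\tilde f(z)\asymp N^2 z$ by \eqref{f-bessel-def}, so for $z\lesssim N^{-2}$ the Bessel argument is $O(1)$: the large-argument expansion of $J_0^2+J_1^2$ is not valid there, your termwise integrals $\int\theta\,\tau'\,|\tau|^{-k-1/2}\,dz$ diverge at $z=0$ for $k\ge 1$, and the non-stationary-phase estimate breaks down because each integration by parts produces amplitudes with non-integrable singularities at the origin. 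Reading the support hypothesis literally, as you do, would make the lemma miss precisely the region the Bessel parametrix was constructed for.

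The paper's proof is organized to survive this region: it never substitutes the asymptotic expansion into the integrand. It instead integrates by parts repeatedly against antiderivatives $F_1,F_2,F_3,\dots$ of $F_0(\zeta)$ (after subtracting the non-integrable leading monomials), which are defined and bounded on all of $[0,\infty)$, so the argument makes sense down to $\zeta=0$. The large-$\zeta$ expansions enter only through Lemma \ref{lemma-exp}: to show that $F_{3j}-c_1^{(3j)}\zeta^{1/2}$ is uniformly bounded (hence $|A_{j+1}|\le C N^{-2j-1/3}$) and to identify the $N$-independent coefficients $\hat e_j$; the gain of $N^{-2}$ per stage comes from the scaling $\zeta\asymp N^{2/3}$ in \eqref{zeta-def}, not from a pointwise large-argument substitution. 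To repair your argument you would need to split the integral at, say, $z=N^{-2+\delta}$, perform a separate small-argument analysis of $J_0^2+J_1^2$ in the inner region, and match the two expansions --- which is essentially what the antiderivative construction accomplishes in one stroke.
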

We will apply integration by parts and prove the above Lemma by induction. To achieve this goal, we need a few preliminary results. Define
\begin{equation}\label{zeta-def}
\zeta ^{3/2}(z)=\frac{N}{2}\int_0^z \sqrt{\frac{\beta-s}{s}}h(s)ds,
\end{equation}
then, we have from the definition of $\tilde f$ in \eqref{f-bessel-def}
\begin{equation} 
4\tilde f=e^{\pm\pi i}\zeta^3=-\zeta^3 \quad \textrm{and} \quad
4\tilde f'=-3\zeta^2\zeta'.
\end{equation}
With the help of the above formula, (\ref{e510}) can be rewritten as
\begin{align}\label{e514}
&\frac{1}{N}\int_0^{z^*+\varepsilon}\theta(z) \left[I_0^2(2\tilde
f^{1/2})-(I_0'(2\tilde f^{1/2}))^2\right] \tilde f' dz =-\frac{3}{4N}\int_0^{z^*+\varepsilon}\theta(z) F_0(\zeta) \zeta' dz,
\end{align}
where $F_0(\zeta)$ is an analytic function in $\zeta$ defined below
\begin{equation}\label{e515}
F_0(\zeta):=\left[J_0^2(\zeta^{3/2})+(J_0'(\zeta^{3/2}))^2\right]
\zeta^2.
\end{equation}

\begin{prop}
  The function $F_0$ satisfies the following asymptotic expansion as $\zeta \to \infty$.
\begin{equation} 
F_0(\zeta)\sim\zeta^{1/2}\left(\sum_{i=0}^\infty c_i \zeta^{-3i}+
\sum_{i=1}^\infty\bar c_i
\zeta^{-3i}\sin(2\zeta^{3/2})+\sum_{i=1}^\infty\tilde c_i
\zeta^{-3i+7/3}\cos (2\zeta^{3/2})\right),
\end{equation}
where $c_i$, $\bar c_i$ and $\tilde c_i$ are constants which can be
computed explicitly.
\end{prop}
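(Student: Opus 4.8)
The plan is to substitute the classical large-argument (Poincar\'e) asymptotics of the Bessel functions directly into the definition \eqref{e515}. Writing $x:=\zeta^{3/2}$ and recalling that $J_0'=-J_1$, we have $F_0(\zeta)=[\,J_0^2(x)+J_1^2(x)\,]\,\zeta^2$ with $\zeta^2=x^{4/3}$. For large $x>0$ one has, from \cite{dlmf}, $J_\nu(x)\sim\sqrt{2/(\pi x)}\,[\cos\omega_\nu\,P_\nu(x)-\sin\omega_\nu\,Q_\nu(x)]$ with $\omega_\nu=x-\tfrac12\nu\pi-\tfrac14\pi$, where $P_\nu$ is an asymptotic series in even powers of $1/x$ and $Q_\nu$ one in odd powers, the coefficients being the standard $a_k(\nu)$. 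Using $\omega_1=\omega_0-\pi/2$ and setting $c:=\cos(x-\pi/4)$, $s:=\sin(x-\pi/4)$, this reads $J_0\sim\sqrt{2/(\pi x)}\,(cP_0-sQ_0)$ and $J_1\sim\sqrt{2/(\pi x)}\,(sP_1+cQ_1)$.

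First I would square and add, obtaining $J_0^2+J_1^2\sim \tfrac{2}{\pi x}\,[\,c^2(P_0^2+Q_1^2)+s^2(Q_0^2+P_1^2)+2cs(P_1Q_1-P_0Q_0)\,]$. Then I would linearise the trigonometric factors through $c^2=\tfrac12(1+\sin 2x)$, $s^2=\tfrac12(1-\sin 2x)$ and $2cs=-\cos 2x$, which regroups the right-hand side into a non-oscillatory part proportional to $P_0^2+P_1^2+Q_0^2+Q_1^2$, a part carrying $\sin 2x$ proportional to $P_0^2+Q_1^2-Q_0^2-P_1^2$, and a part carrying $\cos 2x$ proportional to $P_0Q_0-P_1Q_1$. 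Since $2x=2\zeta^{3/2}$, these supply precisely the two oscillatory factors required in the statement.

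The decisive structural point is the parity of the amplitude series: each $P_\nu$ has only even powers of $1/x$ and each $Q_\nu$ only odd powers. Hence the non-oscillatory and the $\sin 2x$ coefficients are even series in $1/x$, while the $\cos 2x$ coefficient is an odd series. Moreover the constant term of the $\sin 2x$ coefficient cancels, because $a_0(0)=a_0(1)=1$ forces $P_0^2-P_1^2=O(x^{-2})$ and $Q_1^2-Q_0^2=O(x^{-2})$, so that series begins one order later than the non-oscillatory one; the $\cos 2x$ series begins at order $1/x$ since $P_0Q_0-P_1Q_1=O(1/x)$ with nonzero leading coefficient (indeed $a_1(0)-a_1(1)=-\tfrac12$). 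Multiplying through by $\tfrac{1}{\pi x}$ and then by $\zeta^2=x^{4/3}$, and finally substituting $x=\zeta^{3/2}$ so that $x^{-2i}=\zeta^{-3i}$ while $x^{1/3}=\zeta^{1/2}$, converts the three series into exactly the three sums displayed in the Proposition: the index ranges $i\ge0$, $i\ge1$, $i\ge1$ are dictated by the starting orders just identified, and the constants $c_i,\bar c_i,\tilde c_i$ come out as finite combinations of the $a_k(0),a_k(1)$.

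What remains is only to certify that these formal operations yield a genuine asymptotic expansion: finite linear combinations and Cauchy products of Poincar\'e expansions are again Poincar\'e expansions, and the remainder after truncating the Bessel expansions is $O(x^{-M})$ uniformly on $[X,\infty)$, a bound preserved after multiplication by the algebraic factor $\zeta^2$. I expect no real obstacle here, only delicate bookkeeping: one must carefully track the cancellation of the leading $\sin 2x$-coefficient and the half-integer powers $\zeta^{1/2}$ generated by $x^{4/3}$, since these are exactly what force the three sums into the stated pattern with their distinct starting indices.
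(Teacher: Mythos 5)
Your argument is essentially the paper's own proof: both substitute the large-argument Poincar\'e expansions of $J_0$ and $J_0'$ (your $P_\nu,Q_\nu$ for $J_0,J_1$ are the paper's $w_1,\dots,w_4$), reduce $\cos^2,\sin^2,2\cos\sin$ to $1,\sin(2\zeta^{3/2}),\cos(2\zeta^{3/2})$, and use the even/odd parity of the amplitude series to fix the starting orders of the three sums. One small point: your computation (correctly, and in agreement with the paper's own proof body, which finds $f_C^{(0)}$ in powers $\zeta^{-3j-3/2}$) gives cosine exponents $\zeta^{-3i+3/2}$, so the exponent $\zeta^{-3i+7/3}$ displayed in the Proposition appears to be a typo rather than a defect in your argument.
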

\begin{proof}
  It is well-known that the
Bessel functions $J_\nu(z)$ and its derivative satisfy the following
asymptotic expansion as $z \to \infty$
\begin{equation}\label{e551}
J_0(z)\sim\left(\frac{2}{\pi
z}\right)^{1/2}\left(\cos\left(z-\frac{\pi}{4}\right) w_1-\sin
\left(z-\frac{\pi}{4}\right) w_2\right), \quad\quad |\arg
z|<\pi-\delta,
\end{equation}
\begin{equation} \label{e552}
J_0'(z) \sim-\left(\frac{2}{\pi
z}\right)^{1/2}\left(\sin\left(z-\frac{\pi}{4}\right) w_3+\cos
\left(z-\frac{\pi}{4}\right) w_4\right), \quad\quad |\arg
z|<\pi-\delta,
\end{equation}
where
\begin{equation}\label{e555}
w_1\sim\sum_{k=0}^\infty (-1)^k \frac{a_{2k}}{z^{2k}}, \quad
w_2\sim\sum_{k=0}^\infty (-1)^k \frac{a_{2k+1}}{z^{2k+1}}, \quad
w_3\sim\sum_{k=0}^\infty (-1)^k \frac{b_{2k}}{z^{2k}}, \quad
w_4\sim\sum_{k=0}^\infty (-1)^k \frac{b_{2k+1}}{z^{2k+1}};
\end{equation}
see \cite[\S 10.17]{dlmf}.
The coefficients $a_k$ and $b_k$ are known explicitly
\begin{equation*}
a_k=\frac{(4-1^2)(4-3^2)\cdots(4-(2k-1)^2)}{k!8^k}, \quad\quad b_k=\frac{(-1)^{k-1}((2k-3)!!)^2(4k^2-1)}{k!8^k},
\end{equation*}
with $a_0=1$, $b_0=1$ and $b_1=3/8$.

Substitute the expansions of the Bessel functions
(\ref{e551}) and (\ref{e552}) into (\ref{e515}), we get the expansion for $F_0(\zeta)$ as follows
\begin{equation}
F_0(\zeta)\sim \frac{1}{\pi}\zeta^{1/2}(f_0^{(0)}+f_S^{(0)}\sin(2\zeta^3/2)+f_C^{(0)}\cos(2\zeta^3/2)),
\end{equation}
where $f_0^{(0)}$, $f_S^{(0)}$ and $f_C^{(0)}$ are asymptotic series
valid for $\zeta \to \infty$,
\begin{align*}
f_0^{(0)}=w_1^2+w_2^2+w_3^2+w_4^2, \quad
f_S^{(0)}=w_1^2-w_2^2-w_3^2+w_4^2, \quad
f_C^{(0)}=2(w_1w_2-w_3w_4).
\end{align*}
Now using definitions (\ref{e555}) of the asymptotic series
$\{w_j\}_{j=1}^4$, we see that $f_0^{(0)}$ is an asymptotic
expansion in powers of $\zeta^{-3}$ starting with the constant term.
Similarly, $f_S^{(0)}$ is an asymptotic expansion in powers of
$\zeta^{-3}$ starting with $\zeta^{-3}$, and $f_C^{(0)}$ is an
asymptotic expansion in powers of the form $\zeta^{-3j-3/2}$,
$j=0,1,\cdots$. This finishes the proof of our proposition.
\end{proof}
We need one more lemma as follows.

\begin{lem} \label{lemma-exp}
If a function $F(\zeta)$ is $C^{\infty}$ on $[0, \infty)$, and
possess the following asymptotic expansion
\begin{equation}
F(\zeta) \sim \sum_{i=0}^\infty c_i\zeta^{-j/2-3i/2} \ \rm{trig}
(2\zeta^{3/2}), \qquad \textrm{as } \zeta \to \infty,
\end{equation}
in which $j \in \mathbb{N}$, and $\rm{trig}(\cdot)$ denotes
either $\sin(\cdot)$ or $\cos(\cdot)$, then the function
$\int_s^\infty F(\zeta)d\zeta$ possesses the following asymptotic
expansion as $s \to \infty$:
\begin{equation}
\int_s^\infty F(\zeta)d\zeta \sim \sum_{i=0}^\infty  c_i^*
s^{-(j+1)/2-3i/2}\ \rm{trig}(2\zeta^{3/2}).
\end{equation}
\end{lem}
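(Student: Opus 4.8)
The plan is to prove the lemma by repeated integration by parts, exploiting that differentiating a fixed power of $\zeta$ against the oscillatory factor $\mathrm{trig}(2\zeta^{3/2})$ lowers the exponent by $1/2$ while interchanging $\sin$ and $\cos$. By linearity of the integral it suffices to treat a single monomial term $\zeta^{-a}\,\mathrm{trig}(2\zeta^{3/2})$ with $a=j/2+3i/2$, together with a tail remainder; the full statement then follows by summing over $i$ and estimating the error. First I would record the elementary identities $\frac{d}{d\zeta}\sin(2\zeta^{3/2})=3\zeta^{1/2}\cos(2\zeta^{3/2})$ and $\frac{d}{d\zeta}\cos(2\zeta^{3/2})=-3\zeta^{1/2}\sin(2\zeta^{3/2})$, which drive the entire argument.

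The key computation is a single integration by parts. Writing $\cos(2\zeta^{3/2})=\tfrac{1}{3}\zeta^{-1/2}\frac{d}{d\zeta}\sin(2\zeta^{3/2})$ and integrating by parts gives, for $b>0$,
\[
\int_s^\infty \zeta^{-b}\cos(2\zeta^{3/2})\,d\zeta=-\frac{1}{3}\,s^{-b-1/2}\sin(2s^{3/2})+\frac{2b+1}{6}\int_s^\infty \zeta^{-b-3/2}\sin(2\zeta^{3/2})\,d\zeta,
\]
with the analogous identity obtained by exchanging $\sin$ and $\cos$. The boundary contribution at $\infty$ vanishes because $b\ge j/2\ge 0$ forces $\zeta^{-b-1/2}\to 0$; this also shows the oscillatory tail converges, so that $\int_s^\infty F$ is well defined. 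Each such step produces a boundary term at $s$ of order $s^{-b-1/2}$ carrying the complementary trigonometric factor, and reduces the remaining integral to one whose exponent has increased by $3/2$. Starting from $b=a=j/2+3i/2$ and iterating, the boundary terms appear at the powers $s^{-(j+1)/2-3(i+m)/2}$, $m=0,1,2,\dots$, with the trig type alternating. Reindexing by the total order $n=i+m$ and collecting contributions from all $i$ produces exactly the claimed expansion $\sum_n c_n^* s^{-(j+1)/2-3n/2}\,\mathrm{trig}(2s^{3/2})$ and determines the coefficients $c_n^*$ explicitly.

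To make this rigorous at a fixed target order $K$, I would split $F=P_M+R_M$, where $P_M=\sum_{i=0}^{M-1}c_i\,\zeta^{-j/2-3i/2}\,\mathrm{trig}(2\zeta^{3/2})$ is the partial sum and $M$ is chosen larger than $K$; in fact $M=K+2$ suffices. For each of the finitely many monomials in $P_M$ the integration by parts above is exact, so after finitely many steps $\int_s^\infty P_M$ equals the retained part of the expansion up to order $K$, plus boundary and leftover-integral terms all of order at most $s^{-(j+1)/2-3(K+1)/2}$. For the remainder we use only the crude absolute bound $|R_M(\zeta)|\le C\,\zeta^{-j/2-3M/2}$ coming from the asymptotic hypothesis, which gives $\int_s^\infty R_M=O(s^{\,1-j/2-3M/2})$; with $M=K+2$ this is $O(s^{-(j+1)/2-3(K+1)/2})$, exactly the target error order. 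The decisive point — and essentially the only place requiring care — is this bookkeeping of exponents: one must check that retaining two more terms in the partial sum than the target order pushes the crudely estimated tail strictly below the last coefficient kept, so that no oscillatory analysis of $R_M$ itself is needed. Summing over the finitely many monomials and then letting $K\to\infty$ completes the proof.
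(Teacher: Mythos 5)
Your proof is correct and follows essentially the same route as the paper, which gives no details here but simply defers to the analogous Lemma 5.5 of Ercolani--McLaughlin, itself proved by the same repeated integration by parts against $\frac{d}{d\zeta}\mathrm{trig}(2\zeta^{3/2})=\pm 3\zeta^{1/2}\,\mathrm{trig}'(2\zeta^{3/2})$. The one point that genuinely needed checking is your exponent bookkeeping, and it checks out: with $M=K+2$ the crude tail bound $O(s^{\,1-j/2-3M/2})$ equals $O(s^{-(j+1)/2-3(K+1)/2})$, exactly the order of the first omitted term.
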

\begin{proof}
The proof is similar to that of \cite[Lemma 5.5]{em2003}.
\end{proof}

Now let us derive the first two terms of the asymptotic expansion of \eqref{e510}, together with its error term.
\begin{prop}
  We have
  \begin{equation}
-\frac{3}{4N}\int_0^{z^*+\varepsilon}\theta(z) F_0(\zeta) \zeta' dz=\hat e_0+\frac{1}{N^2}\hat e_1+A_2,
\end{equation}
where $\hat e_0$ and $\hat e_1$ are independent of $N$ and $|A_2| \leq CN^{-7/3}$ for a constant $C$.
\end{prop}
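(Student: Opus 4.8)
The plan is to substitute the large-$\zeta$ expansion of $F_0$ established just above directly into the representation \eqref{e514}, integrate term by term, and isolate the two contributions surviving at orders $N^0$ and $N^{-2}$. The structural fact driving everything is that $\theta$ is compactly supported \emph{inside} the open interval $(0,z^*+\varepsilon)$: on $\operatorname{supp}\theta$ one has $z\ge a$ for some $a>0$, and since $\zeta^{3/2}(z)=\tfrac N2 G(z)$ with $G(z):=\int_0^z\sqrt{(\beta-s)/s}\,h(s)\,ds$ strictly increasing, this forces $\zeta(z)\ge cN^{2/3}\to\infty$ uniformly on the support. Hence the asymptotic expansion of $F_0$ is uniformly valid throughout the range of integration, and I may replace $F_0$ by a truncation with a uniformly controlled remainder. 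I would split $F_0$ into its non-oscillatory part $\zeta^{1/2}\sum_i c_i\zeta^{-3i}$ and the two oscillatory families carrying $\sin(2\zeta^{3/2})$ and $\cos(2\zeta^{3/2})$.

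The non-oscillatory part is handled exactly. Writing $\zeta^{3/2}=\tfrac N2 G$ gives $\zeta^{1/2}\zeta'=\tfrac N3 G'$ and, more generally, $\zeta^{1/2-3i}\zeta'=\tfrac1{3/2-3i}\tfrac{d}{dz}\big[(\tfrac N2)^{1-2i}G^{1-2i}\big]$, so that the $i$-th term of the series contributes at exactly order $N^{-2i}$. The term $i=0$ produces the $N$-independent constant $\hat e_0=-\tfrac{c_0}{4}\int_0^{z^*+\varepsilon}\theta\,G'\,dz$, and $i=1$ produces $N^{-2}\hat e_1$ with $\hat e_1=-c_1\int_0^{z^*+\varepsilon}\theta\,G^{-2}G'\,dz$; both integrals are finite precisely because $\operatorname{supp}\theta$ avoids the point $0$, where $G^{-2}G'$ fails to be integrable. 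The remaining terms $i\ge2$ sum to $O(N^{-4})$, which I absorb into $A_2$.

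For the two oscillatory families the phase is $2\zeta^{3/2}=N\,G(z)$, whose derivative $N\,G'(z)=N\sqrt{(\beta-z)/z}\,h(z)$ is bounded away from $0$ on $\operatorname{supp}\theta$ and has no stationary point there. Passing to the phase variable $u=G(z)$ converts each family into an integral $\int \Phi(u)\,u^{p}\,\cos(Nu)\,du$ (resp.\ with $\sin$) whose amplitude $\Phi(u)u^{p}$ is smooth and compactly supported away from $u=0$; by non-stationary phase — repeated integration by parts, or equivalently Lemma \ref{lemma-exp} applied to the associated tail integrals — these decay faster than any power of $N$. The exponent $7/3$ in the stated error merely records the largest amplitude power that occurs, carried by the cosine family whose coefficient is $\zeta^{-3i+7/3}$, so these terms are comfortably $O(N^{-7/3})$. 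Combined with the $O(N^{-4})$ non-oscillatory tail, this gives $|A_2|\le CN^{-7/3}$.

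The step I expect to be the main obstacle is the rigorous control of the oscillatory families. The difficulty is that each oscillatory amplitude is multiplied by a \emph{growing} power of $\zeta\sim N^{2/3}$, so one must show that the oscillation of $\sin(2\zeta^{3/2})$ and $\cos(2\zeta^{3/2})$ overcompensates; this requires careful repeated integration by parts with uniform bookkeeping of the fractional powers of $\zeta$, or a clean appeal to Lemma \ref{lemma-exp}. Alongside this, one must justify interchanging the merely asymptotic (non-convergent) expansion of $F_0$ with the integral, which is legitimate here only because the uniform lower bound $\zeta\ge cN^{2/3}$ on $\operatorname{supp}\theta$ keeps the truncation remainder uniformly small.
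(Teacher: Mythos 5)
Your argument is correct under the literal reading of the hypothesis that $\operatorname{supp}\theta$ is a compact subset of the \emph{open} interval $(0,z^*+\varepsilon)$, and it takes a genuinely different route from the paper's. You substitute the large-$\zeta$ expansion of $F_0$ directly into \eqref{e514} --- legitimate because $\zeta\ge cN^{2/3}$ uniformly on the support --- integrate the non-oscillatory terms exactly via $\zeta^{p}\zeta'=\tfrac{1}{p+1}\tfrac{d}{dz}\zeta^{p+1}$ together with $\zeta^{3/2}=\tfrac N2G$, and dispose of the oscillatory families by non-stationary phase (the phase $NG(z)$ has $G'$ bounded below on the support, and the amplitudes are $N$-powers times $N$-independent smooth compactly supported functions). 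Your identifications of $\hat e_0$ and $\hat e_1$ agree with the paper's by uniqueness of asymptotic expansions, and your remainder is in fact $O(N^{-4})$, stronger than the stated $O(N^{-7/3})$. The paper never expands $F_0$ inside the integral: it integrates by parts three times in $z$, pushing derivatives onto $\theta$ and replacing $F_0$ by the antiderivatives $F_1,F_2,F_3$ defined by convergent tail integrals as in \eqref{e563}; only the large-$\zeta$ expansions of these (via Lemma \ref{lemma-exp}) are used, to read off $\hat e_1$ from the growing term $c_1^{(3)}\zeta^{1/2}$ of $F_3$ and to bound the remaining bounded part of $F_3$.

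The trade-off deserves emphasis, because it is close to being a gap. Everything in your proof hinges on the lower bound $\zeta\ge cN^{2/3}$ on $\operatorname{supp}\theta$; you invoke it a second time to make $\int\theta\,G^{-2}G'\,dz$ converge, since $G^{-2}G'\sim z^{-3/2}$ at the hard edge. But in the application for which this proposition is deployed, $\theta$ carries the factor $\chi_0\Theta$ with $\chi_0\equiv1$ on $[0,z^*-\varepsilon)$, so $\theta(0)\neq0$, the variable $\zeta$ of \eqref{zeta-def} ranges down to $0$ on the support, and the uniform validity of the expansion of $F_0$ --- the first line of your proof --- fails; so does the non-stationary-phase step, whose amplitudes are then neither smooth in the required sense nor supported where the expansion holds. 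The paper's construction of $F_1,F_2,F_3$ as functions that are $C^\infty$ on all of $[0,\infty)$ is precisely what is built to survive the Bessel turning point at $\zeta=0$ (the delicate point there being the boundary terms $\theta(0)F_j(0)$ in the integrations by parts, which the paper dismisses by the same support hypothesis you use). So: as a proof of the proposition with its hypothesis taken at face value, yours is valid and cleaner; as a proof of what Section \ref{sec-mainthm} actually needs, it does not extend, whereas the paper's architecture does.
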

\begin{proof}
  We rewrite the right-hand side of  (\ref{e514}) as
\begin{equation}
-\frac{3}{4N}\int_0^{z^*+\varepsilon}\theta(z) F_0(\zeta) \zeta' dz=\hat e_0+A_1,
\end{equation}
where
\begin{equation}
\hat e_0=-\frac{3}{4N}\int_0^{z^*+\varepsilon}c_0\theta(z)\zeta^{1/2}
\zeta' dz,
\end{equation}
and
\begin{equation}\label{e525}
A_1=-\frac{3}{4N}\int_0^{z^*+\varepsilon}\theta(z)
\left(F_0(\zeta)-c_0\zeta^{1/2}\right) \zeta' dz.
\end{equation}
Although there is a factor $N$ in the leading term $\hat e_0$, it is actually $N$-independent
\begin{align}
\hat e_0=&-\frac{3}{4N}\int_0^{z^*+\varepsilon}c_0\theta(z)\zeta^{1/2} \zeta' dz=-\frac{1}{2N}\int_0^{z^*+\varepsilon}c_0\theta(z)d\zeta^{3/2}\notag\\
=& -\frac{c_0}{4}\int_0^{z^*+\varepsilon}\theta(z)\sqrt{\frac{\beta-z}{z}}h(z)dz;
\end{align}
see the definition of $\zeta(z)$ in \eqref{zeta-def}.
To get the next term, we apply integration by parts in (\ref{e525}) and obtain
\begin{equation}
-\frac{3}{4N}\int_0^{z^*+\varepsilon}\theta(z) F_0(\zeta) \zeta' dz=\hat e_0+\frac{3}{4N}\int_0^{z^*+\varepsilon}\theta'(z) F_1(\zeta) dz
\end{equation}
with
\begin{equation}\label{e517}
F_1'(\zeta)=F_0(\zeta)-c_0\zeta^{1/2}.
\end{equation}
We don't have any contributions from the boundary terms because $\theta(z)$ is compactly
supported within $(0, z^*+\varepsilon)$. Next, we repeat integration by parts twice to produce the higher order terms
\begin{equation}
-\frac{3}{4N}\int_0^{z^*+\varepsilon}\theta(z) F_0(\zeta) \zeta' dz=\hat e_0+\hat e_1 N^{-2}+A_2,
\end{equation}
where
\begin{equation}\label{e530}
\hat e_1=\frac{3}{4N}
\int_0^{z^*+\varepsilon}\left[\frac{1}{\zeta'}\left(\frac{1}{\zeta'}\theta'(z)\right)'\right]'c_1^{(3)}\zeta^{1/2}dz,
\end{equation}
and
\begin{equation}\label{e531}
A_2=\frac{3}{4N}\int_0^{z^*+\varepsilon}\left[\frac{1}{\zeta'}\left(\frac{1}{\zeta'}\theta'(z)\right)'\right]'
(F_3(\zeta)-c_1^{(3)}\zeta^{1/2}) dz.
\end{equation}
Here the constant $c_1^{(3)}$ in \eqref{e530} and \eqref{e531} is the leading coefficient of $F_3(\zeta)$ defined later. This function $F_3(\zeta)$ satisfies the following relation
\begin{equation} \label{e518}
  F_2'(\zeta)=F_1(\zeta), \qquad F_3'(\zeta)=F_2(\zeta).
\end{equation}
Again, since $\zeta$ is of order $N^{2/3}$ (cf. (\ref{zeta-def})), $\hat e_1$ is also independent of $N$. To get the approximation of $A_2$, we recall the relations among $F_i(\zeta)$ in \eqref{e517} and \eqref{e518}. Define $F_1(\zeta)$ as follows
\begin{equation}\label{e563}
F_1(\zeta):=-\int_\zeta^\infty (F_0(s)-c_0s^{1/2})ds.
\end{equation}
Since the integrand in the above integral behaves like $O(\zeta^{-5/2})$ as $\zeta \to \infty$, then $F_1(\zeta)$ exists and satisfies
\eqref{e517}. Moreover, according to Lemma \ref{lemma-exp}, it satisfies the following asymptotic expansion as $\zeta \to
\infty$
\begin{equation}
F_1(\zeta)\sim\sum_{i=1}^\infty
c_i^{(1)}\zeta^{-3/2-3(i-1)}+\sum_{i=1}^\infty \bar c_i^{(1)}
\zeta^{-3/2-3(i-1)} \sin(2\zeta^{3/2})+\sum_{i=1}^\infty \tilde
c_i^{(1)} \zeta^{-3i}\cos (2\zeta^{3/2}),
\end{equation}
where $c_1^{(1)}=-2c_1/3$. Similarly as \eqref{e563}, we define
\begin{equation}
F_2(\zeta):=-\int_\zeta^\infty F_1(s)ds \quad \textrm{and} \quad F_3(\zeta):=\int_\zeta^\infty(F_2(s)-c_1^{(2)}s^{-1/2})ds-2c_1^{(2)}\zeta^{1/2}.
\end{equation}
These two functions exist and satisfy the following expansions
\begin{equation}
F_2(\zeta)\sim\sum_{i=1}^\infty
c_i^{(2)}\zeta^{-1/2-3(i-1)}+\sum_{i=1}^\infty \bar c_i^{(2)}
\zeta^{-1/2-3i}\sin(2\zeta^{3/2})+\sum_{i=1}^\infty \tilde c_i^{(2)}
\zeta^{-2-3(i-1)}\cos (2\zeta^{3/2}),
\end{equation}
\begin{equation}
F_3(\zeta)\sim\sum_{i=1}^\infty
c_i^{(3)}\zeta^{1/2-3(i-1)}+\sum_{i=1}^\infty \bar c_i^{(3)}
\zeta^{-5/2-3(i-1)}\sin(2\zeta^{3/2})+\sum_{i=1}^\infty \tilde
c_i^{(3)} \zeta^{-1-3i}\cos (2\zeta^{3/2}).
\end{equation}
As $F_3(\zeta)-c_1^{(3)}\zeta^{1/2}$ is uniformly bounded, we get the $|A_2| \leq CN^{-7/3}$, which completes the proof.
\end{proof}
With the above preparations, we are ready for the proof of Lemma \ref{main-lemma}.

\bigskip

\noindent\emph{Proof of Lemma \ref{main-lemma}.} We prove this result by induction.  For a positive integer $k$, define $F_{3k+1}(\zeta)$, $F_{3k+2}(\zeta)$ and $F_{3k+3}(\zeta)$ as
follows:
\begin{equation}\label{e569}
F_{3k+1}(\zeta) =-\int_\zeta^\infty(F_{3k}(s)-c_1^{(3k)}s^{1/2})ds,
\quad F_{3k+2}(\zeta) =-\int_\zeta^\infty F_{3k+1}(s)ds
\end{equation}
and
\begin{equation}
F_{3k+3}(\zeta)=-\int_\zeta^\infty(F_{3k+2}(s)-c_1^{(3k+2)}s^{-1/2})ds-2c_1^{(3k+2)}\zeta^{1/2}.
\end{equation}
Assume these functions satisfy the following asymptotic expansions for all integers $k\leq j-1$ as $\zeta \to \infty$:
\begin{equation}\label{f-3k+1}
F_{3k+1}(\zeta) \sim \zeta^{1/2}\sum_{i=1}^\infty
c_i^{(3k+1)}\zeta^{1-3i}+f_S^{(3k+1)}\sin(2\zeta^{3/2})+f_C^{(3k+1)}\cos(2\zeta^{3/2}),
\end{equation}
\begin{align}
&{\rm{for}}\hbox{ } 3k+1\hbox{ }{\rm even}:
\begin{cases}
f_S^{(3k+1)}=\zeta^{-(3k+6)/2}\sum_{i=1}^\infty \bar c_i^{(3k+1)}\zeta^{-3(i-1)},\\
f_C^{(3k+1)}=\zeta^{-(3k+3)/2}\sum_{i=1}^\infty \tilde c_i^{(3k+1)}\zeta^{-3(i-1)},\\
\end{cases}
\notag\\
&{\rm{for}}\hbox{ } 3k+1\hbox{ }{\rm odd}:
\begin{cases}
f_S^{(3k+1)}=\zeta^{-(3k+3)/2}\sum_{i=1}^\infty \bar c_i^{(3k+1)}\zeta^{-3(i-1)},\\
f_C^{(3k+1)}=\zeta^{-(3k+6)/2}\sum_{i=1}^\infty \tilde c_i^{(3k+1)}\zeta^{-3(i-1)};\\
\end{cases}
\end{align}
\begin{equation}
F_{3k+2}(\zeta)\sim\zeta^{1/2}\sum_{i=1}^\infty
c_i^{(3k+2)}\zeta^{-1-3(i-1)}+f_S^{(3k+2)}\sin(2\zeta^{3/2})+f_C^{(3k+2)}\cos(2\zeta^{3/2}),
\label{f-3k+2}
\end{equation}
\begin{align}
&{\rm{for}}\hbox{ } 3k+2\hbox{ }{\rm even}:
\begin{cases}
f_S^{(3k+2)}=\zeta^{-(3k+7)/2}\sum_{i=1}^\infty \bar c_i^{(3k+2)}\zeta^{-3(i-1)},\\
f_C^{(3k+1)}=\zeta^{-(3k+4)/2}\sum_{i=1}^\infty \tilde c_i^{(3k+2)}\zeta^{-3(i-1)},\\
\end{cases}
\notag\\
&{\rm{for}}\hbox{ } 3k+2\hbox{ }{\rm odd}:
\begin{cases}
f_S^{(3k+2)}=\zeta^{-(3k+4)/2}\sum_{i=1}^\infty \bar c_i^{(3k+2)}\zeta^{-3(i-1)},\\
f_C^{(3k+2)}=\zeta^{-(3k+7)/2}\sum_{i=1}^\infty \tilde c_i^{(3k+2)}\zeta^{-3(i-1)};\\
\end{cases}
\end{align}
\begin{equation}\label{f-3k+3}
F_{3k+3}(\zeta)\sim\zeta^{1/2}\sum_{i=1}^\infty
c_i^{(3k+3)}\zeta^{-3(i-1)}+f_S^{3k+3}\sin(2\zeta^{3/2})+f_C^{3k+3}\cos(2\zeta^{3/2}),
\end{equation}
\begin{align}
&{\rm{for}}\hbox{ } k\hbox{ }{\rm even}:
\begin{cases}
f_S^{(3k+3)}=\zeta^{-(3k+5)/2}\sum_{i=1}^\infty \bar c_i^{(3k+3)}\zeta^{-3(i-1)},\\
f_C^{(3k+3)}=\zeta^{-(3k+8)/2}\sum_{i=1}^\infty \tilde c_i^{(3k+3)}\zeta^{-3(i-1)},\\
\end{cases}
\notag\\
&{\rm{for}}\hbox{ } k\hbox{ }{\rm odd}:
\begin{cases}
f_S^{(3k+3)}=\zeta^{-(3k+8)/2}\sum_{i=1}^\infty \bar c_i^{(3k+3)}\zeta^{-3(i-1)},\\
f_C^{(3k+3)}=\zeta^{-(3k+5)/2}\sum_{i=1}^\infty \tilde c_i^{(3k+3)}\zeta^{-3(i-1)}.\\
\end{cases}
\end{align}
Moreover, suppose we have the following formula
\begin{equation}\label{e540}
-\frac{3}{4N}\int_0^{z^*+\varepsilon}\theta(z)F_0(\zeta)\zeta'dz=\sum_{i=0}^j
N^{-2i}\hat e_i+A_{j+1},
\end{equation}
where $\hat e_k$ are all independent of $N$ and
\begin{equation}
A_{j+1}=(-1)^{j+1}\frac{3}{4N}\int_0^{z^*+\varepsilon}\frac{d}{dz}\left(\frac{1}{\zeta'}\frac{d}{dz}\left(\frac{1}{\zeta'}\cdots\frac{d}{dz}
\left(\frac{\theta'(z)}{\zeta'}\right)\cdots\right)\right)
(F_{3j}(\zeta)-c_1^{(3j)}\zeta^{1/2})dz.
\end{equation}
Here the differential operator
\begin{equation}\label{e542}
\frac{d}{dz}\left(\frac{1}{\zeta'}\right)
\end{equation}
appears $(3j-1)$ times in the nested set of derivatives appearing in the integral above. According to the asymptotic expansion for $F_{3j}(\zeta)$, it is easily seen that $F_{3j}(\zeta)-c_1^{(3j)}\zeta^{1/2}$ is uniformly bounded. This, together with \eqref{zeta-def}, means that the error term in \eqref{e540} satisfies $|A_{j+1}|  \leq CN^{-(2j+\frac{1}{3})}$.

Now let us apply integration by parts again to derive next term in \eqref{e540}. According to definitions of $F_k(\zeta)$ in \eqref{e569}, it is easy to see that
\begin{equation}
F_{3j}(\zeta)-c_1^{(3j)}\zeta^{1/2}=\frac{1}{\zeta'}\frac{d}{dz}F_{3j+1}(\zeta),
\end{equation}
Then integration by parts gives us
\begin{equation}
A_{j+1}=(-1)^{j+1}\frac{3}{4N}\int_0^{z^*+\varepsilon}\frac{d}{dz}\left(\frac{1}{\zeta'}\frac{d}{dz}\left(\frac{1}{\zeta'}\cdots\frac{d}{dz}
\left(\frac{\theta'(z)}{\zeta'}\right)\cdots\right)\right)F_{3j+1}dz,
\end{equation}
where now the differential operator (\ref{e542}) appears $3j$ times. Two more integration by parts gives us
\begin{equation}\label{e545}
A_{j+1}=(-1)^{j+1}\frac{3}{4N}\int_0^{z^*+\varepsilon}\frac{d}{dz}\left(\frac{1}{\zeta'}\frac{d}{dz}\left(\frac{1}{\zeta'}\cdots\frac{d}{dz}
\left(\frac{\theta'(z)}{\zeta'}\right)\cdots\right)\right)F_{3j+3}dz,
\end{equation}
where now the differential operator (\ref{e542}) appears $(3j+2)$
times in the above integral. According to Lemma \ref{lemma-exp}, one can verify that the expansions in \eqref{f-3k+1}, \eqref{f-3k+2} and \eqref{f-3k+3} are also valid for $k=j$. Therefore, using \eqref{f-3k+3} for $k=j$, we split (\ref{e545}) into the following two terms
\begin{equation}
A_{j+1}=N^{-2j-2}\hat e_{j+1}+A_{j+2},
\end{equation}
where
\begin{equation}
\hat e_{j+1}=(-1)^{j+2}N^{2j+1}\int_0^{z^*+\varepsilon}\frac{d}{dz}\left(\frac{1}{\zeta'}\frac{d}{dz}\left(\frac{1}{\zeta'}\cdots\frac{d}{dz}
\left(\frac{\theta'(z)}{\zeta'}\right)\cdots\right)\right)(c_1^{(3j+3)}\zeta^{1/2})dz
\end{equation}
and
\begin{equation}
A_{j+2}=(-1)^{j+2}\frac{3}{4N}\int_0^{z^*+\varepsilon}\frac{d}{dz}\left(\frac{1}{\zeta'}\frac{d}{dz}\left(\frac{1}{\zeta'}\cdots\frac{d}{dz}
\left(\frac{\theta'(z)}{\zeta'}\right)\cdots\right)\right)(F_{3j+3}-c_1^{(3j+3)}\zeta^{1/2})dz.
\end{equation}
Here the differential operator (\ref{e542}) appears $(3j+2)$ times
in each of the above integrals. Recalling the definition of $\zeta$ in (\ref{zeta-def}) again, one can see that $\hat e_{j+1}$ is a constant
independent of $N$. Moreover, we have $|A_{j+2}|  \leq CN^{-(2j + \frac{7}{3})}$.

Therefore, we have shown that \eqref{e540} also holds when we increase $j$ by 1. As a consequence, (\ref{e510})
has an asymptotic expansion in powers of $1/N^2$ and our lemma is proved.  \hfill $\Box$

\bigskip

Now we can prove our main results.

\bigskip

\noindent\emph{Proof of Theorem \ref{thm2}.} Let us recall \eqref{rho-exp-full}. As we mentioned earlier, based on the results in \cite{em2003}, the second term of its right-hand side satisfies an asymptotic expansion in powers of $1/N^2$, whose coefficients are analytic functions of $\textbf{t}$. According to Proposition \ref{prop-four-int} and Lemma \ref{main-lemma}, the first term of the right-hand side also satisfies an asymptotic expansion in powers of $1/N^2$. The analytic property of coefficients in the expansion follows from the similar analysis as in \cite{em2003}. Adding these two terms together, we prove our theorem.  \hfill $\Box$

\begin{rmk}
  As mentioned at the beginning of this paper, a combination of \eqref{zn-int} and Theorem \ref{thm2} gives us Theorem \ref{main-thm}.
\end{rmk}

\subsection*{Acknowledgements}
  D. Dai was partially supported by a grant from City University of Hong Kong (Project No. 7002883) and a grant from the Research Grants Council of the Hong Kong Special Administrative Region, China (Project No. CityU 101411).


\end{document}